\newcommand*{\mailto}[1]{\href{mailto:#1}{\nolinkurl{#1}}}
\newtheorem{theorem}{Theorem}[section]
\newtheorem{lemma}[theorem]{Lemma}
\newtheorem{corollary}[theorem]{Corollary}
\newcommand{\R}{{\mathbb R}}
\newcommand{\C}{{\mathbb C}}
\newcommand{\cR}{{\mathcal R}}
\newcommand{\cK}{{\mathcal K}}
\newcommand{\om}{\omega}
\newcommand{\ds}{\displaystyle}
\newcommand{\erf}{\mathrm{erf}}
\newcommand{\spr}[2]{\langle #1 , #2 \rangle}
\newcommand{\I}{\mathrm{i}}
\newcommand{\E}{\mathrm{e}}
\DeclareMathOperator{\im}{Im}
\numberwithin{equation}{section}
\begin{document}

\title[Dispersion Estimates for Schr\"odinger and Klein--Gordon Equations]{Dispersion Estimates for
One-Dimensional Schr\"odinger and Klein--Gordon Equations Revisited}

\author[I. Egorova]{Iryna Egorova}
\address{B. Verkin Institute for Low Temperature Physics\\ 47, Lenin ave\\ 61103 Kharkiv\\ Ukraine}
\email{\href{mailto:iraegorova@gmail.com}{iraegorova@gmail.com}}

\author[E.\ Kopylova]{Elena Kopylova}
\address{Faculty of Mathematics\\ University of Vienna\\
Oskar-Morgenstern-Platz 1\\ 1090 Wien\\ Austria\\ and Institute for Information Transmission Problems\\ Russian Academy of Sciences\\
Moscow 127994\\ Russia}
\email{\mailto{Elena.Kopylova@univie.ac.at}}
\urladdr{\url{http://www.mat.univie.ac.at/~ek/}}

\author[V.\ A.\ Marchenko]{Vladimir Aleksandrovich Marchenko}
\address{B. Verkin Institute for Low Temperature Physics\\ 47, Lenin ave\\ 61103 Kharkiv\\ Ukraine}
\email{\href{mailto:marchenko@ilt.kharkov.ua}{marchenko@ilt.kharkov.ua}}

\author[G.\ Teschl]{Gerald Teschl}
\address{Faculty of Mathematics\\ University of Vienna\\
Oskar-Morgenstern-Platz 1\\ 1090 Wien\\ Austria\\ and International Erwin Schr\"odinger
Institute for Mathematical Physics\\ Boltzmanngasse 9\\ 1090 Wien\\ Austria}
\email{\mailto{Gerald.Teschl@univie.ac.at}}
\urladdr{\url{http://www.mat.univie.ac.at/~gerald/}}

\dedicatory{Dedicated to the memory of Boris Moiseevich Levitan}

\thanks{Russian Math. Surveys {\bf71}, 3--26 (2016)}
\thanks{{\it Research supported by the Austrian Science Fund (FWF) under Grants No.\ P27492-N25 and Y330}}

\keywords{Schr\"odinger equation, Klein--Gordon equation, dispersive estimates, scattering}
\subjclass[2010]{Primary 35L10, 34L25; Secondary 81U30, 81Q15}

\begin{abstract}
We show that for a one-dimensional Schr\"odinger operator with a potential whose first moment is integrable
the scattering matrix is in the unital Wiener algebra of functions with integrable Fourier transforms.
Then we use this to derive dispersion estimates for solutions of the associated Schr\"odinger and Klein--Gordon equations.
In particular, we remove the additional decay conditions in the case where a resonance is present at the edge of
the continuous spectrum.
\end{abstract}

\maketitle

\section{Introduction}

We are concerned with the one-dimensional Schr\"odinger equation
\begin{equation} \label{Schr}
  \I \dot \psi(x,t)=H \psi(x,t), \quad H:=-\frac{d^2}{dx^2}  + V(x),\quad (x,t)\in\R^2,
\end{equation}
and the Klein--Gordon equation
\begin{equation} \label{KGE}
\ddot \psi(x,t)= -(H+m^2) \psi(x,t),\quad (x,t)\in\R^2,\quad m>0,
\end{equation}
with real integrable potential $V$.
In vector form equation \eqref{KGE} reads
\begin{equation} \label{KGEv}
\I\dot \Psi(t)=\mathbf{H} \Psi(t),
\end{equation}
where
\begin{equation} \label{H}
\Psi(t)=\begin{pmatrix}
  \psi(t)
  \\
  \dot\psi(t)
  \end{pmatrix},\quad
 \mathbf{H} = \I\begin{pmatrix}
  0                          &   1\\
 -H-m^2  &   0
\end{pmatrix}.
\end{equation}
More specifically, our goal is to provide dispersive decay estimates for these equations. This is a well-studied
area and one of our main contribution is a strikingly simple proof which at the same time improves previous results.
This approach depends on the fact that the scattering matrix minus the unit matrix is in the Wiener algebra (i.e.\ its Fourier transform
is integrable). Since this result is of independent interest we prove it first in Section~\ref{wa-sec}. Based
on this we will then establish our main results. To formulate them we introduce the weighted spaces
$L^p_{\sigma}=L^p_{\sigma}(\R)$, $\sigma\in\R$, associated with the norm
\begin{equation*}
   \Vert \psi\Vert_{L^p_{\sigma}}= \begin{cases} \left(\int_{\R}(1+|x|)^{p\sigma} |\psi(x)|^p dx\right)^{1/p}, & 1\le p <\infty,\\
   \sup_{x\in\R} (1+|x|)^{\sigma} |\psi(x)|, & p=\infty.
   \end{cases}
\end{equation*}
Of course, the case $\sigma=0$ corresponds to the usual $L^p$ spaces without weight.
We recall (e.g., \cite{Mar} or \cite[Sect.~9.7]{tschroe}) that for $V\in L^1_1$ the operator $H$ has a purely absolutely continuous spectrum
on $[0,\infty)$ plus a finite number of eigenvalues in $(-\infty,0)$. At the edge of the continuous spectrum there could be a resonance
if there is a corresponding bounded solution of $-\psi''+V\psi=0$
(equivalently, if the Wronskian of the two Jost solutions vanishes at this point).

The following two results hold for the Schr\"odinger equation:
\begin{theorem}\label{Main}
Let $V\in L^1_1(\R)$. Then the following decay holds
\begin{equation}\label{full}
\Vert \E^{-\I tH}P_c\Vert_{L^1\to L^\infty}=\mathcal{O}(t^{-1/2}),\quad t\to\infty.
\end{equation}
Here $P_c=P_c(H)$ is the orthogonal projection in
$L^2(\R)$ onto the continuous spectrum of $H$.
\end{theorem}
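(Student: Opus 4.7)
The plan is to leverage the key result announced for Section~\ref{wa-sec}: under the single hypothesis $V\in L^1_1$, the scattering coefficients $T(k)-1$ and $R_\pm(k)$ lie in the Wiener algebra $\cW=\{\hat F:F\in L^1(\R)\}$. This property reduces the entire dispersive estimate to a single elementary Gaussian bound applied uniformly in $x,y$.

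First, I would express the kernel of $\E^{-\I tH}P_c$ via Stone's formula applied to the Jost-solution representation of the resolvent
\begin{equation*}
G(k^2+\I 0;x,y)=\frac{f_+(k,x_+)\,f_-(k,x_-)}{W(k)},\qquad W(k)=\frac{2\I k}{T(k)},
\end{equation*}
where $f_\pm(k,x)\sim \E^{\pm\I kx}$ as $x\to\pm\infty$ and $x_+=\max(x,y)$, $x_-=\min(x,y)$. Using $f_\pm(-k,x)=\overline{f_\pm(k,x)}$ and $T(-k)=\overline{T(k)}$ to symmetrise the spectral integral over all of $\R$ produces
\begin{equation*}
[\E^{-\I tH}P_c](x,y)=\frac{1}{2\pi}\int_\R \E^{-\I tk^2}\,T(k)\,f_+(k,x_+)\,f_-(k,x_-)\,dk.
\end{equation*}
I would then invoke the scattering identity $T(k)f_\mp(k,\cdot)=\overline{f_\pm(k,\cdot)}+R_\pm(k)f_\pm(k,\cdot)$ together with the Marchenko transformation formula
\begin{equation*}
f_\pm(k,x)=\E^{\pm\I kx}+\int_x^{\pm\infty}B_\pm(x,z)\,\E^{\pm\I kz}\,dz,\qquad B_\pm(x,\cdot)\in L^1,
\end{equation*}
to rewrite the kernel as a finite sum of oscillatory integrals of the form
\begin{equation*}
I(t,\xi)=\int_\R \E^{-\I tk^2+\I k\xi}\,a(k)\,dk,
\end{equation*}
in which $\xi=\xi(x,y,z,z')$ is a linear combination of the spatial and Marchenko integration variables, and each amplitude $a$ is a product of a scattering coefficient (possibly $1$) with exponential-of-Marchenko factors, so that $a\in\cW$ with $\cW$-norm bounded uniformly in $x,y$.

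For each such integral, writing $a(k)=\int_\R F(\eta)\,\E^{\I k\eta}\,d\eta$ with $F\in L^1$ and applying Fubini collapses it to
\begin{equation*}
I(t,\xi)=\sqrt{\pi/(\I t)}\int_\R F(\eta)\,\E^{\I(\xi+\eta)^2/(4t)}\,d\eta,
\end{equation*}
which is bounded in modulus by $\sqrt{\pi/t}\,\|F\|_{L^1}$, uniformly in $\xi$ and hence in $x,y$. Summing the finitely many contributions via the triangle inequality yields the uniform kernel bound $|[\E^{-\I tH}P_c](x,y)|\le Ct^{-1/2}$, which is equivalent to \eqref{full}. The main technical point I expect to wrestle with is the bookkeeping of the several phase combinations generated by the $\pm$ choices and the Marchenko representations, and the verification that each resulting amplitude genuinely lies in $\cW$ with controlled norm. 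The delicate regime is near $k=0$ in the presence of a zero-energy resonance, where $T(0)\ne 0$ forces the classical stationary-phase approach to demand extra decay on $V$; here the Wiener-algebra statement of Section~\ref{wa-sec} is precisely tailored to absorb that obstruction, so the Gaussian bound still applies uniformly and no additional moment condition on $V$ is required.
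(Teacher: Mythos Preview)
Your proposal is correct and follows essentially the same route as the paper: the kernel is written via the Jost solutions, the scattering relations are used (case-by-case in the signs of $x,y$) to ensure the amplitude has uniformly bounded Wiener-algebra norm, and then Fubini together with the explicit Gaussian integral give the $t^{-1/2}$ bound. The one place where the paper is more careful than your sketch is the ``Fubini'' step---since $\E^{-\I tk^2}$ is not integrable the $k$-integral is only an improper Riemann integral, and the paper justifies the interchange by truncating to $|k|\le k_0$, recognizing the inner integral as Fresnel integrals, and passing to the limit via dominated convergence (Lemma~\ref{GT}).
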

\begin{theorem}\label{Main-new}
Let $V\in L^1_2(\R)$. Then, in the non-resonant case, the following decay holds
\begin{equation}\label{full-new}
\Vert \E^{-\I tH}P_c\Vert_{L^1_1\to L^\infty_{-1}}=\mathcal{O}(t^{-3/2}),\quad t\to\infty.
\end{equation}
\end{theorem}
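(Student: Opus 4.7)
The plan is to combine the spectral representation of $\E^{-\I tH}P_c$ via Jost solutions with a single integration by parts that exploits the first-order vanishing of the transmission coefficient at $k=0$ in the non-resonant case. The Wiener algebra input from Section~\ref{wa-sec}, which underlies Theorem~\ref{Main}, will then deliver the required oscillatory integral bound.

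I start from the Jost solutions $f_\pm(x,k)=\E^{\pm\I kx}m_\pm(x,k)$, in terms of which the kernel of $\E^{-\I tH}P_c$ for $x\ge y$ admits the representation
\begin{equation*}
K_t(x,y)=\frac{1}{2\pi}\int_\R \E^{-\I tk^2+\I k(x-y)}\,A(k,x,y)\,dk,\qquad A(k,x,y):=T(k)m_+(x,k)m_-(y,k),
\end{equation*}
with a symmetric expression for $x<y$. Theorem~\ref{Main} amounts to showing that $A(\cdot,x,y)$ lies in the Wiener algebra $\cW$ uniformly in $x,y$, which by itself produces the rate $t^{-1/2}$; the task is therefore to extract one additional factor $t^{-1}$.

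In the non-resonant case the Wronskian $W(k)=W(f_+,f_-)$ is non-zero at $k=0$, and consequently $T(k)=2\I k/W(k)$ vanishes to first order there. This allows the factorization $A(k,x,y)=k\,B(k,x,y)$ with $B(k,x,y)=(2\I/W(k))\,m_+(x,k)m_-(y,k)$. Using the identity $k\E^{-\I tk^2}=-\tfrac{1}{2\I t}\partial_k\E^{-\I tk^2}$ and integrating by parts in $k$ gives
\begin{equation*}
K_t(x,y)=\frac{1}{4\pi\I t}\int_\R \E^{-\I tk^2+\I k(x-y)}\bigl(\I(x-y)B(k,x,y)+\partial_k B(k,x,y)\bigr)\,dk.
\end{equation*}
If both $B$ and $\partial_k B$ belong uniformly in $x,y$ to the Wiener algebra, then applying the bound of Theorem~\ref{Main} to each integral produces an additional $t^{-1/2}$; combined with the prefactor $1/t$ this yields the desired $t^{-3/2}$ decay, while the linear factor $|x-y|\le(1+|x|)(1+|y|)$ accounts precisely for the weighted spaces $L^1_1\to L^\infty_{-1}$.

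The main obstacle is the uniform Wiener-algebra control of $\partial_k B$, which requires derivative estimates on the Jost solutions of the form $\|\widehat{\partial_k m_\pm(x,\cdot)}\|_{L^1}\le C(1+|x|)$ together with an analogous bound for $\partial_k(1/W)$. These follow from differentiating the Marchenko-type integral equations for $m_\pm$ in $k$ and are available exactly when $V\in L^1_2$, which explains the strengthened moment assumption. A secondary technical issue is that $1/W(k)$ is only continuous (not decaying) at infinity in $k$, so a low/high-frequency splitting has to be performed before the Wiener-algebra machinery is applied; for the high-frequency piece, a further integration by parts exploiting the non-stationary phase $-tk^2+k(x-y)$ reproduces the same $t^{-3/2}$ rate without having to invoke the vanishing of $T$.
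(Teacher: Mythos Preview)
Your overall architecture---factor the zero of $T(k)=2\I k/W(k)$ at $k=0$, integrate by parts once against $\partial_k\E^{-\I tk^2}$, and then invoke the Wiener-algebra oscillatory bound underlying Lemma~\ref{GT}---is precisely the paper's strategy. But there is a genuine gap in your execution, and your closing paragraph is misdirected.

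The gap is the claim that $B=(2\I/W)m_+(x,\cdot)m_-(y,\cdot)$ and $\partial_kB$ lie in $\mathcal A$ \emph{uniformly} in $x,y$, so that the weight $(1+|x|)(1+|y|)$ comes exclusively from the explicit factor $|x-y|$. This is false: for $m_\pm(x,\cdot)$ on the wrong half-line (say $m_+(x,\cdot)$ with $x<0$) the Marchenko kernel estimates \eqref{est1}--\eqref{est11} are useless (the bound $\E^{\gamma_+(x)}$ blows up), and differentiating the Volterra equation does \emph{not} produce the $C(1+|x|)$ bound you assert. What actually controls the wrong half-line is the scattering relation \eqref{scat-rel}: it converts the combination $T(k)h_\pm(x,k)$ into an expression involving only $h_\mp$, $R_\mp$, and factors like $(1-\E^{\mp2\I kx})/k$, and it is this mechanism that yields the linear-weight bounds $\|T h_\pm(x,\cdot)/k\|_{\mathcal A}\le C(1+|x|)$ and $\|\partial_k(T h_\pm(x,\cdot))\|_{\mathcal A}\le C(1+|x|)$ for all $x\in\R$ (Lemmas~\ref{lemma-n} and~\ref{lem:proizv}). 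In particular the weights arise both from $|x-y|$ \emph{and} from the Jost factors, and only after the scattering-relation rewriting do they combine to $(1+|x|)(1+|y|)$. The paper carries this out not with your asymmetric kernel $Tf_+f_-$ but with the symmetric form \eqref{WS}, where $|T|^2=T(k)T(-k)$ splits so that each Jost factor can be paired with its own copy of $T$; your representation can be made to work too, but only after invoking the same two lemmas. Finally, your low/high-frequency splitting is unnecessary: in the non-resonant case $W^{-1}\in\mathcal A$ outright (Step~(i) of Theorem~\ref{MT}), so no separate large-$k$ argument is needed.
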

Note that for the free Schr\"odinger equation \eqref{Schr}
with $V=0$ the estimate \eqref{full} is immediate from the explicit formula for
the time evolution (see e.g.\ \cite[Sect.~7.3]{tschroe}).
The dispersive decay \eqref{full} for the perturbed Schr\"odinger equation
has been established by Goldberg and Schlag \cite{GS}, improving earlier results from Weder \cite{wed},
in the non-resonant case for $V\in L^1_1$ and in the resonant case
under the more restrictive condition $V\in L^1_2$ (see also \cite{DF}). We emphasize that our
approach does not require this additional decay in the resonant case. Moreover,
our proof  for Theorem~\ref{Main} is a simple application of Fubini's theorem.
To show that the extra decay in the resonant case is not needed we generalize
an old (but obviously not so well known) result from Guseinov \cite{Gus}.
We also remark that in the half-line case the analogous result for the scattering data
is well known (cf.\ Problem~3.2.1 in \cite{Mar}) and was used by Weder \cite{wed3} to prove
a corresponding result in the half-line case.

Recall that \eqref{full} has some immediate consequences:
Interpolating between unitarity of $\exp(-\I t H) : L^2 \to L^2$   
and \eqref{full} the Riesz--Thorin theorem gives
\begin{equation}\label{n1}
\Vert \E^{-\I tH}P_c\Vert_{L^{p'}\to L^p} = \mathcal{O}(t^{-1/2+1/p})
\end{equation}
for any $p\in [2,\infty]$ with $\frac{1}{p}+\frac{1}{p'}=1$. Using \eqref{n1} we can also deduce the
corresponding Strichartz estimates of \cite[Theorem~1.2]{KT}. 

The dispersive decay \eqref{full-new} has been established by Schlag \cite{schlag} in the case $V\in L^1_4$
and later refined by Goldberg \cite{gold} to the case $V\in L^1_3$. For $V\in L_2^1$ the estimate 
\eqref{full-new} as first obtained by Mizutani in \cite{M11}. Here we propose a proof of \eqref{full-new}
based on a somewhat different approach.

Note that the decay \eqref{full-new} immediately implies the following long-time asymptotics  in weighted norms:
\begin{equation}\label{m-as}
\Vert \E^{-\I tH}P_c\Vert_{L^2_\sigma\to L^2_{-\sigma}}=\mathcal{O}(t^{-3/2}),\quad t\to\infty,
\end{equation}
for any $\sigma>3/2$.
Asymptotics of type \eqref{m-as} in the non-resonant case were obtained by Murata \cite{M}
for more general (multi-dimensional) Schr\"odinger-type operators. In particular, in the one-dimensional case
these asymptotics were established for $\sigma>5/2$ and $|V(x)|\le C(1+|x|)^{-\rho}$ with some $\rho>4$  
(see also \cite{schlag} for an up-to-date review in this direction, in particular for higher dimensions).

In the second part of the present work we   obtain some  new dispersion estimates  for the Klein--Gordon equation. To this end we introduce
the Bessel potential
\[
\mathcal J_{\alpha} = \mathcal F^{-1} (1+|\cdot|^2)^{\alpha/2} \mathcal F,
\]
where $\mathcal F$ is the Fourier transform. Then the generalized Sobolev space
$H^{\alpha,1}_\sigma(\R)$ (cf.\ \cite[Definition 6.2.2]{BL})
is the space of all tempered distributions $f\in\mathcal{S}'(\R)$ for which the norm
\begin{equation}\label{H-def} 
\Vert f\Vert_{H^{\alpha,1}_\sigma}=\Vert \mathcal J_\alpha f\Vert_{L^1_\sigma},
\qquad \alpha,\sigma\in\R,
\end{equation}
is finite. As before $H^{\alpha,1}=H^{\alpha,1}_0$.

\begin{theorem}\label{Main1}
i) Let $V\in L^1_1(\R)$. Then the following decay holds
\begin{equation}\label{full1}
\Vert [\E^{-\I t\mathbf{H}}{\bf P}_c]^{12}\Vert_{H^{\frac 12,1}\to L^{\infty}}
=\mathcal{O}(t^{-1/2}),\quad t\to\infty.
\end{equation}
ii)  Let  $V\in L^1_2(\R)$. Then, in the non-resonant case, the following decay holds
\begin{equation}\label{full1-new}
\Vert [\E^{-\I t\mathbf{H}}{\bf P}_c]^{12}\Vert_{H^{\frac 12,1}_{1}\to L^{\infty}_{-1}}
=\mathcal{O}(t^{-3/2}),\quad t\to\infty.
\end{equation}
Here ${\bf P}_c$ is the orthogonal projection in
$L^2(\R)\oplus L^2(\R)$ onto the continuous spectrum of $\mathbf{H}^2$ and
$[\cdot]^{ij}$ denotes the $ij$ entry of the corresponding matrix operator.
\end{theorem}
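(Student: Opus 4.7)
The plan is to reduce each statement to the scalar operator $\om^{-1}\sin(t\om)P_c$ with $\om=\sqrt{H+m^2}$, and then apply the Wiener-algebra-plus-Fubini scheme used for Theorems~\ref{Main} and~\ref{Main-new}. A direct computation with the matrix $\mathbf{H}$ in~\eqref{H}, based on the identity $(-\I\mathbf{H})^2=-(H+m^2)\cdot\mathrm{Id}$, gives
\begin{equation*}
\E^{-\I t\mathbf{H}} = \cos(t\om)\cdot\mathrm{Id} + \frac{\sin(t\om)}{\om}\,(-\I\mathbf{H}),
\qquad
[\E^{-\I t\mathbf{H}}{\bf P}_c]^{12}=\frac{\sin(t\om)}{\om}\,P_c,
\end{equation*}
and writing $2\I\sin(t\om)=\E^{\I t\om}-\E^{-\I t\om}$ reduces the proof of both parts to norm bounds for $\om^{-1}\E^{\pm\I t\om}P_c$.

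Next I would insert the distorted Fourier transform of $H$, as in the proof of Theorem~\ref{Main}, writing the integral kernel of $\om^{-1}\E^{\pm\I t\om}P_c$ as an oscillatory integral in $k$ involving the products $\psi_+(x,k)\overline{\psi_+(y,k)}$ of Jost solutions. Combining the Levin--Marchenko representation of the Jost solutions with the Wiener-algebra property of the scattering data established in Section~\ref{wa-sec}, these products split into a free plane-wave piece $\E^{\I k(x-y)}/(2\pi)$ plus correction terms whose $k$-dependent amplitudes are Fourier transforms of $L^1$-functions. Applying Fubini exactly as in the Schr\"odinger case expresses the kernel as a weighted $L^1$-average of the free Klein--Gordon kernel
\begin{equation*}
G_0(t,s)=\frac{1}{2\pi}\int_\R\frac{\E^{\pm\I t\sqrt{k^2+m^2}+\I k s}}{\sqrt{k^2+m^2}}\,dk.
\end{equation*}

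For part~(i), what remains is the free dispersive estimate
\begin{equation*}
\bigl\|\om_0^{-1}\E^{\pm\I t\om_0}\bigr\|_{H^{1/2,1}\to L^\infty}=\mathcal{O}(t^{-1/2}),\qquad \om_0=\sqrt{-\partial_x^2+m^2},
\end{equation*}
which follows from stationary-phase analysis of $\int \E^{\I(ks\pm t\sqrt{k^2+m^2})}(k^2+m^2)^{-1/2}(1+k^2)^{-1/4}\,dk$: outside the light cone $|s|>t$ integration by parts gives rapid decay, while inside the cone the stationary point $k_0=\mp ms/\sqrt{t^2-s^2}$ yields a bound of order $t^{-1/2}(k_0^2+m^2)^{1/4}(1+k_0^2)^{-1/4}\lesssim t^{-1/2}$, uniformly in $s$. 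The factor $(1+k^2)^{-1/4}$ from the Bessel potential $\mathcal{J}_{-1/2}$ is precisely what compensates for the degeneracy of the group velocity $k/\sqrt{k^2+m^2}\to\pm 1$ as $|k|\to\infty$, and this is the reason for the $H^{1/2,1}$ norm on the left of~\eqref{full1}. Combining this free bound with Fubini and the $L^1$-control from Section~\ref{wa-sec} completes the proof of~\eqref{full1}.

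Part~(ii) follows by the same scheme with the refinement underlying Theorem~\ref{Main-new}: in the non-resonant case the scattering coefficients vanish to first order at $k=0$, providing an extra factor of $k$ that allows one additional integration by parts in the stationary-phase analysis of $G_0$, gaining an extra $t^{-1}$ at the cost of one power of $(1+|s|)$, which is absorbed by the spatial weights $(1+|x|)$ and $(1+|y|)$ encoded in $H^{1/2,1}_1$ and $L^\infty_{-1}$. The main technical obstacle in both parts is preserving the uniform-in-$s$ character of the $G_0$-bound across the light cone $|s|=t$, where the stationary phase degenerates; as explained above, this is precisely what the Bessel-potential norm $H^{1/2,1}$ is designed to absorb.
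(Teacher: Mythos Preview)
Your reduction to $\omega^{-1}\sin(t\omega)P_c$ is correct, and for low energies (where $k$ ranges over a compact set and $\phi''(k)=m^2(k^2+m^2)^{-3/2}$ is bounded below) your Fubini scheme works and matches the paper's Theorem~\ref{thKG1}. The gap is at high energy.

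After Fubini you have written the perturbed kernel as
\[
G_0(t,|x-y|)+\int_\R\hat\psi(x,y,p)\,G_0(t,|x-y|+p)\,dp,
\]
and you then invoke the free \emph{operator} bound $\|\omega_0^{-1}\E^{\pm\I t\omega_0}\|_{H^{1/2,1}\to L^\infty}=\mathcal{O}(t^{-1/2})$. These two pieces do not fit together: the Fubini step leaves you needing a \emph{pointwise} bound $\sup_s|G_0(t,s)|$, and that does not decay. Indeed the kernel of $\omega_0^{-1}\sin(t\omega_0)$ is $\tfrac12 J_0(m\sqrt{t^2-s^2})$ on $|s|<t$, so $\sup_s|G_0(t,s)|=\tfrac12$ for every $t$. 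The $(1+k^2)^{-1/4}$ Bessel factor that rescues the free operator bound cannot simply be inserted into the oscillatory amplitude, because $\mathcal J_{-1/2}$ is nonlocal in $y$ while the correction $\psi(x,y,k)$ depends on $y$ through the transformation kernels $B_\pm(y,\cdot)$ in a way that does not commute with $\mathcal J_{-1/2}$. So ``free estimate plus Fubini'' does not close.

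The paper's remedy is to split off the high-energy part via a cutoff $\xi(\mathbf{H}^2)$ and then apply the second resolvent identity $\cR=\cR_0-\cR_0 V\cR$ rather than the Wiener-algebra decomposition. This yields a genuinely free piece $\mathbf K_0$, on which $\mathcal J_{-1/2}$ acts cleanly to produce the borderline $k^{-3/2}$ amplitude handled by Lemma~\ref{prop:psi0}, plus a remainder $\mathbf K_1$ that carries an \emph{extra} factor $1/k$ coming from the free resolvent kernel $\E^{\I k|x-z|}/(2\I k)$. That extra decay pushes the remainder amplitude to order $k^{-2}$, so Lemma~\ref{l1} with $\alpha=2$ gives a direct $L^1\to L^\infty$ bound and the embedding $H^{1/2,1}\subset L^1$ finishes the job without $\mathcal J_{-1/2}$ ever having to interact with the $y$-dependent part of $\psi$. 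Your scheme never generates this extra $1/k$, which is why the high-energy part does not close; the same obstruction carries over verbatim to part~(ii), where the paper again integrates by parts once in $k$ and then repeats the $\mathbf K_0$/$\mathbf K_1$ split.
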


We remark that the corresponding decay for the other entries of the matrix operator $\E^{-\I t\mathbf{H}}{\bf P}_c$
can be obtained similarly.

We note that Theorem \ref{Main1} (i) is frequently stated in terms of the Besov space $B^{\frac 12}_{1,1}(\R)$
defined in \cite [Definitions 6.2.2]{BL}. Namely, recalling
$B^{\frac 12}_{1,1}\subset H^{\frac 12,1}$ (see \cite[Theorem 6.2.4]{BL})
shows that \eqref{full1} holds with $B^{\frac 12}_{1,1}$ in place of $H^{\frac 12,1}$.
Similarly, \eqref{full1-new} holds with $B^{\frac 12}_{1,1,1}$ in place of $H^{\frac 12,1}$,
where $B^{\frac 12}_{1,1,1}$  is the corresponding weighted Besov space (to define $B^{\frac 12}_{1,1,1}$
one need replace $L^1$ by $L^1_1$ in the definition of $B^{\frac 12}_{1,1}$). 
As before this follows from $B^{\frac 12}_{1,1,1}\subset H^{\frac 12,1}_1$
(see  for instance \cite[Proposition 3.12]{MV}).

Moreover, as a consequence of \eqref{full1}  we obtain
\begin{equation}\label{n2}
\Vert [\E^{-\I t\mathbf{H}}{\bf P}_c]^{12}\Vert_{B^{\frac 12-\frac 3p}_{p',p'}\to L^p}
=\mathcal{O}(t^{-\frac 12+\frac 1p}),\quad t\to\infty,\quad \frac 1{p'}+\frac 1p=1,
\end{equation}
for any $p\in [2,\infty]$ under the same assumption $V\in L^1_1(\R)$ (see Corollary \ref{interp}).

In three space dimensions $W^{k,p}\to L^{q}$ estimates for the perturbed Klein--Gordon equation were
established by Soffer and Weinstein \cite{SW} (see also \cite{Y} for general space dimensions $n\ge 3$).
In the one-dimensional case $W^{k,p}\to W^{k,q}$ estimates were obtained by Weder \cite{wed2} for $V\in L^1_\gamma$, where $\gamma>3/2$ in the non-resonant case
and $\gamma>5/2$ in the resonant case.
The dispersive estimate of type \eqref{n2} (with $B^{\frac 12-\frac 3p}_{p',p}(V)$ instead of $B^{\frac 12-\frac 3p}_{p',p'}$)
is shown in {\cite{DF}, but again requiring $V\in L_2^1$ in the resonant case.

For the one-dimensional Klein--Gordon equation the  decay $~t^{-3/2}$ in the weighted energy norms
$H^1_\sigma\oplus L^2_\sigma\to H^1_{-\sigma}\oplus L^2_{-\sigma}$ with $\sigma>5/2$
has been obtained by Komech and Kopylova \cite{KK} (see also the survey \cite{K10}).

Note that dispersion estimates of type \eqref{full}--\eqref{n2} play an important role in proving asymptotic
stability of solitons in the associated one-dimensional nonlinear equations \cite{BS}, \cite{KK11}.
For the discrete Schr\"odinger and wave equations we refer to \cite{EKT}.

\section{Continuity properties of the scattering matrix}
\label{wa-sec}
We first introduce the Banach algebra $\mathcal{A}$ of Fourier transforms of integrable functions
\[
\mathcal{A} = \left\{f(k):\,
f(k) = \int_\R \E^{\I k p}\hat{f}(p)dp, \,\hat{f}(\cdot)\in L^1(\R) \right\}
\]
with the norm $\|f\|_{\mathcal{A}}= \|\hat{f}\|_{L^1}$, plus the corresponding unital Banach algebra $\mathcal{A}_1$
\[
\mathcal{A}_1 = \left\{f(k):\,
f(k) =c+ \int_\R \E^{\I k p}\hat{g}(p)dp, \,\hat{g}(\cdot)\in L^1(\R),\,c\in\C\right\}
\]
with the norm $\|f\|_{\mathcal{A}_1}= |c|+\|\hat{g}\|_{L^1}$.
Evidently,  $\mathcal{A}$ is a subalgebra of $\mathcal{A}_1$.
The algebra $\mathcal{A}_1$ can be treated as an algebra of Fourier transforms of functions
$c\delta(\cdot)+\hat g(\cdot)$, where $\delta$ is the Dirac delta distribution and $\hat g\in L^1(\R)$.
Note that if $f\in\mathcal{A}_1\setminus\mathcal{A}$ and $f(k)\not =0$
for all $k\in\R$ then $f^{-1}(k)\in\mathcal{A}_1$ by the Wiener theorem \cite{Wiener}.

Next we recall a few facts from scattering theory \cite{DT}, \cite{Mar} of the Schr\"odinger operator $H$, defined by formula \eqref{Schr}.
Under the assumption
$V\in L^1_1$ there exist Jost solutions $f_\pm(x,k)$ of
\[
H\psi=k^2\psi,\quad  k\in \overline{\C_+},
\]
normalized according to
\[
f_\pm(x,k)\sim  \E^{\pm \I kx},\quad x\to \pm \infty.
\]
These solutions are given by
\begin{equation}\label{Jost}
f_\pm(x,k) = \E^{\pm \I kx} h_\pm(x,k), \qquad h_\pm(x,k)
= 1 \pm \int_{0}^{\pm\infty} B_\pm(x,y) \E^{\pm 2\I k y}dy,
\end{equation}
where $B_\pm(x,y)$ are real-valued and satisfy (see \cite[\S 2 ]{DT} or \cite[\S 3.1]{Mar})
\begin{align}\label{est1}
|B_\pm (x,y)|&\le  \E^{\gamma_\pm(x)}\eta_\pm(x+y),\\
\label{est11}
|\frac{\partial}{\partial x}B_\pm (x,y)\pm V(x+y)|
&\le 2 \E^{\gamma_\pm(x)}\eta_\pm(x+y)\eta_\pm(x),
\end{align}
with
\begin{equation}\label{est2}
\gamma_\pm(x)=\int_{x}^{\pm\infty}(y-x)|V(y)|dy,\quad
\eta_\pm(x)=\pm\int_{x}^{\pm\infty}|V(y)|dy.
\end{equation}
Since $\eta_\pm(x+\cdot)\in L^1(\R)$ we clearly have
\begin{equation} \label{alg1}
h_\pm(x,\cdot) - 1,~~ h'_\pm(x,\cdot)\in\mathcal A,
\quad \forall x\in\R.
\end{equation}
Let
\[
W(\varphi(x,k),\psi(x,k))=\varphi(x,k)\psi'(x,k)-\varphi'(x,k)\psi(x,k)
\]
be the usual Wronskian, and set
\[
W(k)=W(f_-(x,k),f_+(x,k)),\qquad W_\pm(k)=W(f_{\mp}(x,k),f_{\pm}(x,- k)).
\]
The Jost solutions $f_\pm(x,k)$ and their derivatives do not belong to
$\mathcal{A}_1$, as well as their Wronskian $W(k)$.
However, the entries of the scattering matrix, that is, the transmission and reflection coefficients
\[
T(k)= \frac{2\I k}{W(k)},\quad R_\pm(k)= \mp\frac{W_\pm(k)}{W(k)},
\]
turn out to be the elements of this algebra.
\begin{theorem}\label{MT}
If $V\in L^1_1$, then $T(k)-1\in\mathcal{A}$ and $R_\pm(k)\in\mathcal{A}$.
\end {theorem}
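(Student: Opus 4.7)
The plan is to evaluate all the relevant quantities at the convenient base point $x=0$ and exploit the structure of the Marchenko kernels from \eqref{Jost}. The bounds \eqref{est1}--\eqref{est11} together with $V\in L^1_1$ show that, after a change of variables on the kernel,
\begin{equation*}
h_\pm(0,k)-1=\int_0^\infty \beta_\pm(y)\E^{2\I k y}\,dy,\qquad h'_\pm(0,k)=\int_0^\infty \beta'_\pm(y)\E^{2\I k y}\,dy,
\end{equation*}
for suitable $\beta_\pm,\beta'_\pm\in L^1(0,\infty)$. Thus $h_\pm(0,\cdot)-1$ and $h'_\pm(0,\cdot)$ are half-line Fourier transforms of $L^1$ functions and so lie in $\mathcal{A}$; their products and differences remain in $\mathcal{A}$ or $\mathcal{A}_1$ as appropriate.

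Evaluating the Wronskian at $x=0$ then gives the clean decomposition
\begin{equation*}
W(k)=2\I k\,A(k)+B(k),\qquad A(k):=h_-(0,k)h_+(0,k),\quad B(k):=h_-(0,k)h'_+(0,k)-h'_-(0,k)h_+(0,k),
\end{equation*}
with $A-1\in\mathcal{A}$ and $B\in\mathcal{A}$, and a parallel manipulation of $W_\pm(k)$ (which involves $h_\pm(0,\pm k)$ and $h'_\pm(0,\pm k)$) yields $W_\pm\in\mathcal{A}$.

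To pass from here to the scattering coefficients, my idea is to introduce the auxiliary factor $2\I k+1$, whose reciprocal is the Fourier transform of a one-sided exponential $L^1$ function and therefore lies in $\mathcal{A}$. A short calculation then shows $W(k)/(2\I k+1)\in\mathcal{A}_1$ with limit $1$ at infinity. In the non-resonant case $W\neq 0$ on $\R$, so $|W(k)/(2\I k+1)|$ is bounded below on $\R$ and the Wiener theorem yields $(2\I k+1)/W(k)\in\mathcal{A}_1$. The factorisations
\begin{equation*}
T(k)=\frac{2\I k}{2\I k+1}\cdot\frac{2\I k+1}{W(k)},\qquad R_\pm(k)=\mp\frac{W_\pm(k)}{2\I k+1}\cdot\frac{2\I k+1}{W(k)}
\end{equation*}
then place $T\in\mathcal{A}_1$ and $R_\pm\in\mathcal{A}$ (for the latter using that $\mathcal{A}$ is an ideal in $\mathcal{A}_1$); since $T\to 1$ at infinity, $T-1\in\mathcal{A}$ follows.

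The main obstacle is the resonant case, in which $W(0)=0$ forces a vanishing of $W(k)/(2\I k+1)$ at $k=0$ and Wiener's theorem cannot be applied directly. The redeeming feature is that this zero of $W$ is simple—it reflects the proportionality $f_+(\cdot,0)=c\,f_-(\cdot,0)$ of the two Jost solutions at the resonance energy—and the plan is to follow and adapt Guseinov's~\cite{Gus} trick: replace the auxiliary factor $2\I k+1$ by a correction engineered to cancel the zero of $W$ at the origin while keeping the resulting quotient in $\mathcal{A}_1$ and non-vanishing on $\R$, so that Wiener's theorem still applies, and then propagate the modification back through the formulas for $T-1$ and $R_\pm$. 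Arranging this so that only $V\in L^1_1$ is required—rather than the stronger $V\in L^1_2$ invoked in previous treatments of the resonant case—is the technical heart of the proof.
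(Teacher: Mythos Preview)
Your treatment of the non-resonant case is correct and essentially coincides with the paper's: the paper uses the auxiliary factor $\nu(k)=1/(\I k-1)$ in place of your $1/(2\I k+1)$, but this is an immaterial choice, and the Wiener-theorem argument is the same.

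The resonant case, however, is where the theorem's content lies, and here your proposal stops short of an actual argument. You correctly point to Guseinov and correctly identify that the simple zero of $W$ at $k=0$ must be cancelled, but your description of the mechanism (``replace the auxiliary factor $2\I k+1$ by a correction'') is not what is done, and the key lemma is missing. The paper does \emph{not} modify the auxiliary factor; it shows directly that $B(k)$ (written $\tilde W(k)$ there) and $W_\pm(k)$ each factor as $2\I k$ times an element of $\mathcal{A}$, so that $W(k)/(2\I k)\in\mathcal{A}_1\setminus\mathcal{A}$ is nonvanishing and Wiener applies. Concretely, with $K_\pm(y)=\pm\int_y^{\pm\infty}B_\pm(0,z)\,dz$ and $D_\pm(y)=\pm\int_y^{\pm\infty}\partial_x B_\pm(0,z)\,dz$, integration by parts in \eqref{Jost} pulls a factor $2\I k$ out of $h_\pm(0,k)-h_\pm(0,0)$ and $h'_\pm(0,k)-h'_\pm(0,0)$, and the crux is that the specific combination
\[
H_\pm(y)=D_\pm(y)\,h_\pm(0,0)-K_\pm(y)\,h'_\pm(0,0)
\]
lies in $L^1(\R_\pm)$, even though $K_\pm$ and $D_\pm$ individually need not when $V$ is merely in $L^1_1$. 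The paper proves this (Lemma~\ref{lem:gus}) by deriving from the Gelfand--Levitan--Marchenko equation an integral equation $H_\pm(x)\mp\int_0^{\pm\infty}H_\pm(y)F_\pm(x+y)\,dy=G_\pm(x)$ with $G_\pm\in L^1$, and solving by successive approximations. This cancellation in $H_\pm$ is precisely what separates the $L^1_1$ result from the earlier $L^1_2$ arguments; without it your plan cannot be executed, and your proposal gives no indication of how it would be obtained.
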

\begin{proof}
Since $|T(k)|\le 1$ for $k\in\R$ the Wronskian $W(k)$ can vanish only at the edge of continuous spectrum $k=0$,
which is known as the resonant case. Moreover, the zero is at most of the first order. \\
{\it Step i)}
We first consider the non-resonant case $W(0)\not =0$.
Abbreviate $h_\pm(k):=h_\pm(0,k)$,  $h'_\pm(k):=h'_\pm(0,k)$.
Then \eqref{Jost} implies
\begin{equation}\label{Wr}
W(k)=2\I kh_+(k)h_-(k)+\tilde W(k),\quad \tilde W(k):=h_-(k)h'_+(k)-h'_-(k)h_+(k),
\end{equation}
\begin{equation}\label{Wrpm}
W_\pm(k)=h_{\mp}(k)h'_\pm(-k)-h_\pm(-k)h'_{\mp}(k).
\end{equation}
Moreover, $\tilde W(k),\;W_\pm(k)\in\mathcal{A}$.
Put
\begin{equation}\label{defnu}
\nu(k):=\frac 1{\I k-1}=\int_0^\infty \E^{\I k y}\E^{-y}dy
\end{equation}
and observe that $\nu(k)\in \mathcal{A}$, $k\nu(k)\in\mathcal{A}_1$ and, therefore,
$\nu(k)W(k)\in\mathcal{A}_1$. Since  $\nu(k)W(k)\to 2$ as $k\to\infty$
then $\nu(k)W(k)\in\mathcal{A}_1\setminus\mathcal{A}$.
Moreover, $\nu(k)W(k)\not =0$ for all $k\in\R$, whence
$(\nu(k)W(k))^{-1}\in\mathcal{A}_1$. Furthermore,
$\nu(k)W_\pm(k)\in\mathcal{A}$ and we obtain
\[
R_\pm(k)=\mp\frac{\nu(k)W_\pm(k)}{\nu(k)W(k)}\in\mathcal{A},\quad
T(k)=\frac{2\I k\nu(k)}{\nu(k)W(k)}\in\mathcal{A}_1.
\]
Moreover, since $T(k)\to 1$ as $k\to\infty$ then $T(k)-1\in\mathcal{A}$.

{\it Step ii)}
In the resonant case we need to work a bit harder. Introduce the functions
\begin{equation}\label{Phi}
\Phi_\pm(k):=h_\pm(k)h'_\pm(0)-h'_\pm(k)h_\pm(0),
\end{equation}
\begin{equation}\label{defK}
K_\pm(x):=\pm\int_x^{\pm\infty}B_\pm(0,y)dy,\quad
D_\pm(x):=\pm\int_x^{\pm\infty}\frac{\partial}{\partial x}B_\pm(0,y)dy,
\end{equation}
where $B_\pm(x,y)$ are the transformation operators from \eqref{Jost}.
Integrating \eqref{Jost} formally by parts we obtain
\begin{align*}
h'_\pm(k)&=\pm\int_0^{\pm\infty}\frac{\partial}{\partial x}B_\pm(0,y)\E^{\pm 2\I ky}dy
=D_\pm(0)+2\I k\int_0^{\pm\infty}D_\pm(y)\E^{\pm 2\I ky}dy\\
&=h'_\pm(0)+2\I k\int_0^{\pm\infty}D_\pm(y)\E^{\pm 2\I ky}dy,\\
h_\pm(k)&=h_\pm(0)+2\I k\int_0^{\pm\infty}K_\pm(y)\E^{\pm 2\I ky}dy.
\end{align*}
We emphasize that the above integrals have to be understood as improper integrals.
Inserting them into \eqref{Phi} gives
\[
\Phi_\pm(k)=2\I k\Psi_\pm(k),\quad
\Psi_\pm(k):=\int_0^{\pm\infty}(D_\pm(y)h_\pm(0)-K_\pm(y)h'_\pm(0))\E^{\pm 2\I ky}dy.
\]
\begin{lemma}\label{lem:gus}
If $V\in L^1_1$ then $\Psi_\pm(k)\in \mathcal{A}$.
\end{lemma}
\begin{proof}
Following \cite{Gus} we will prove that the functions
\[
H_\pm(y):=D_\pm(y)h_\pm(0)-K_\pm(y)h'_\pm(0)
\]
 satisfy $H_\pm\in L^1(\R_\pm)\cap L^{\infty}(\R_\pm)$.
We simplify the original proof of \cite{Gus} using the Gelfand--Levitan--Marchenko
equation in the form proposed in \cite{DT}. Namely, as is known (\S 3.5 in \cite{Mar}) the kernels $B_{\pm}(x,y)$ solve the equations
\begin{equation}\label{GLM}
F_\pm(x+y)+B_\pm(x,y)\pm\int_0^{\pm\infty}B_\pm(x,t)F_\pm(x+y+z)dz=0,
\end{equation}
where the functions $F_\pm(x)$ are absolutely continuous with $F'_\pm\in L^1(\R_\pm)$ and
\begin{equation}\label{FF}
|F_\pm (x)|\le C\eta_\pm(x),\quad \pm x\ge 0,
\end{equation}
with $\eta_\pm$ from \eqref{est2}. Now differentiate \eqref{GLM} with respect to $x$
and set $x=0$. Also set $x=0$ in \eqref{GLM} and then integrate both equations with respect to $y$ from
$x$ to $\pm\infty$. Then \eqref{defK} implies
\[
\pm\int_x^{\pm\infty}F_\pm(y)dy+K_\pm(x)+\int_0^{\pm\infty}B_\pm(0,z)
\int_x^{\pm\infty}F_\pm(y+z)dy\,dz=0
\]
and
\begin{align*}
\mp F_\pm(x)+ D_\pm(x)&+\int_0^{\pm\infty}\frac{\partial}{\partial x}B_\pm(0,z)
\int_x^{\pm\infty}F_\pm(y+z)dy\,dz\\
&-\int_0^{\pm\infty}B_\pm(0,z)F_\pm(x+z)dz=0.
\end{align*}
To get rid of double integration here, we apply  \eqref{defK} and the equalities
\[
\frac{\partial}{\partial z}\int_x^{\pm\infty} F_\pm(y+z)dy=- F_\pm(x+z).
\]
The  integration by parts yields
\begin{align}\label{11}
&\pm(1+ K_\pm(0))\int_x^{\pm\infty}F_\pm(y)dy+K_\pm(x)
\mp\int_0^{\pm\infty}K_\pm(z)F_\pm(x+z)dz\\
\nonumber
&=K_\pm(x)\pm  h_\pm(0)\int_x^{\pm\infty}F_\pm(y)dy
\mp\int_0^{\pm\infty}K_\pm(z)F_\pm(x+z)dz=0
\end{align}
and
\begin{align}\label{12}
&\mp F_\pm(x)+ D_\pm(x)\pm h'_\pm(0)\int_x^{\pm\infty}F_\pm(y)dy
\mp\int_0^{\pm\infty}D_\pm(z)F_\pm(x+z)dz\\
\nonumber
&-\int_0^{\pm\infty}B_\pm(0,z)F_\pm(x+z)dz=0.
\end{align}
Multiplying \eqref{11} by $ h'_\pm(0)$ and \eqref{12} by $ h_\pm(0)$ and
subtracting, we get  integral equations
\begin{equation}\label{13}
H_\pm(x)\mp\int_0^{\pm\infty}H_\pm(y)F_\pm(x+y)dy=G_\pm(x),
\end{equation}
where
\[
G_\pm(x)=h_\pm(0)\Big(\int_0^{\pm\infty}B_\pm(0,y)F_\pm(x+y)dy \pm  F_\pm(x)\Big).
\]
The bounds \eqref{est1} and \eqref{FF} imply
\begin{equation}\label{G-est}
|G_\pm(x)|\le C \eta_\pm(x),\quad \pm x\ge 0.
\end{equation}
Furthermore, for sufficiently large $N>0$ represent  \eqref{13} in the form
\begin{equation}\label{14}
H_\pm(x)\mp\int_{\pm N}^{\pm\infty}H_\pm(y)F_\pm(x+y)dy=G_\pm(x,N),
\end{equation}
where
\[
G_\pm(x,N)=G_\pm(x)\pm\int_0^{\pm N}H_\pm(y)F_\pm(x+y)dy.
\]
 Formulas \eqref{defK} and estimates \eqref{est1}--\eqref{est2} give
$H_\pm\in L^{\infty}(\R_\pm)\cap C(\R_\pm)$.
Then $|G_\pm(x,N)|\le C(N)\eta_\pm(x)$ by \eqref{G-est} and monotonicity of $\eta_\pm(x)$.
Applying the method of successive approximations (cf. \cite[Chapter 3, Section 2]{Mar})
to \eqref{14} we obtain  $H_\pm\in L^{1}(\R_\pm)$.
\end{proof}

Now we can continue the proof of Theorem~\ref{MT} in the resonant case.
Since the Jost solutions are linear dependent at $k=0$, i.e. $h_+(x,0)=c\, h_-(x,0)$,
we distinguish two cases: $h_+(0)h_-(0)\not =0$ and $h_+(0)=h_-(0)=0$.
In the first case $h_+(0)h_-(0)\not =0$ we have
\begin{align*}
\tilde W(k)&=\tilde W(k)-\tilde W(0)=\frac{h_+(k)}{h_-(0)}\Phi_-(k)-\frac{h_-(k)}{h_+(0)}\Phi_+(k)\\
&=2\I k\Big(\frac{h_+(k)}{h_-(0)}\Psi_-(k)-\frac{h_-(k)}{h_+(0)}\Psi_+(k)\Big)
\end{align*}
and similarly in the second case $h_+(0)=h_-(0)=0$ (and thus $h'_+(0)h'_-(0)\not =0$) we have $\Phi_\pm(k)=h_\pm(k)h_\pm'(0)=2\I k \Psi_\pm(k)$ and hence
\[
\tilde W(k)= 2\I k\Big(\frac{h'_+(k)}{h'_-(0)}\Psi_-(k)-\frac{h'_-(k)}{h'_+(0)}\Psi_+(k)\Big).
\]
In summary,
\[
\frac{W(k)}{2\I k}= h_-(k) h_+(k) + \begin{cases} \frac{h_+(k)}{h_-(0)}\Psi_-(k)-\frac{h_-(k)}{h_+(0)}\Psi_+(k), & h_+(0)h_-(0)\not =0,\\
\frac{h'_+(k)}{h'_-(0)}\Psi_-(k)-\frac{h'_-(k)}{h'_+(0)}\Psi_+(k), & h_+(0)h_-(0) =0,\end{cases}
\]
where the right-hand side is in $\mathcal{A}_1$ by \eqref{alg1} and Lemma \ref{lem:gus}.
Since $\frac{W(k)}{2\I k} =T(k)^{-1}\ne~0$ we conclude $T(k)-1\in\mathcal{A}$.
Analogously,
\[
\frac{W_\pm(k)}{2\I k}= \begin{cases}
\frac{h_\pm(-k)}{h_{\mp}(0)}\Psi_{\mp}(k)-\frac{h_\mp(k)}{h_\pm(0)}\Psi_\pm(-k) , & h_+(0)h_-(0)\not =0,\\
\frac{h'_\pm(-k)}{h'_\mp(0)}\Psi_\mp(k)-\frac{h'_\mp(k)}{h'_\pm(0)}\Psi_\pm(-k), & h_+(0)h_-(0) =0.\end{cases}
\]
where the right-hand side is again in $\mathcal{A}$ and hence $R_\pm(k) = \mp \frac{W_\pm(k)}{2\I k} T(k) \in\mathcal{A}$.
\end{proof}

Finally, we will investigate the function
\begin{equation}\label{psi}
\psi(x,y,k)= h_+(y,k)h_-(x,k) T(k) -1, \qquad y\ge x,
\end{equation}
and $\psi(x,y,k)=\psi(y,x,k)$ for $y<x$. From Theorem \ref{MT} and formula \eqref{alg1} it follows that $\psi(x,y,\cdot)\in\mathcal{A}$.

\begin{lemma}\label{lemconst}
The following estimate is valid
\begin{equation}\label{thh}
\| \psi(x,y, \cdot)\|_{\mathcal{A}} \le C,
\end{equation}
with some constant $C$, which does not depend on $x$ and $y$.
\end{lemma}

\begin{proof}
Introduce
\[
\sup_{\pm x \geq 0}\left(\pm\int_{0}^{\pm\infty }|B_\pm(x,y)|dy\right)=C_\pm,
\]
which is finite by \eqref{est1}. Then
\begin{equation}\label{est31}
\|h_\pm(x,\cdot)\|_{\mathcal{A}_1}\leq 1+C_\pm,\quad \|h_\pm(x,\cdot) - 1\|_{\mathcal A}\leq C_\pm,\quad\mbox{for}\quad\pm x\geq 0.
\end{equation}
Now consider the three possibilities
(a) $x\leq y\leq 0$,
(b) $0\leq x\leq y$, and
(c) $x\leq 0\leq y$.
In the case (c) the estimate $\| \psi(x,y,\cdot)\|_{\mathcal{A}}\leq C$
follows immediately from \eqref{est31} and Theorem \ref{MT}.
In the other two cases we use the scattering relations
\begin{equation}\label{scat-rel}
T(k)f_\pm(x,k)=R_\mp(k)f_\mp(x,k) + f_\mp(x,-k)
\end{equation}
to get the representation
\begin{equation}\label{psiest}
\psi(k,x,y)=\begin{cases}
 h_-(x,k)\left(R_-(k)h_-(y,k)\E^{-2\I yk} + h_-(y,-k)\right)-1 & x\leq y\leq 0,\\[2mm]
 h_+(y,k)\left(R_+(k)h_+(x,k)\E^{2\I xk} + h_+(x,-k)\right)-1 & 0\leq x\leq y.
\end{cases}
\end{equation}
Observing that for any function $g(k)\in\mathcal A$ and any real $s$ we have $g(k)\E^{\I k s}\in \mathcal A$ with the norm independent of $s$,
establishes \eqref{thh}.
\end{proof}

\section{The Schr\"odinger equation}
\label{ll-sec}
Now we are ready to  prove the dispersive decay estimate \eqref{full} for the Schr\"odinger equation \eqref{Schr}.
For the one-parameter group of \eqref{Schr} the spectral theorem implies
\begin{equation}\label{PP}
   \E^{-\I tH}P_{c}
   =\frac 1{2\pi \I}\int\limits_{0}^{\infty}
   \E^{-\I t\omega}(\cR(\omega+\I 0)- \cR(\omega-\I 0))\,d\omega,
\end{equation}
where $\cR(\omega)=(H-\omega)^{-1}$ is the resolvent of the Schr\"odinger operator $H$
and the limit is understood in the strong sense \cite{tschroe}.
Given the Jost solutions we can express the kernel
of the resolvent $R(\omega)$ for $\omega=k^2\pm \I 0$, $k>0$, as \cite{DT,tschroe}
\[
[\cR(k^2\pm \I 0)](x,y) = - \frac{f_+(y,\pm k) f_-(x,\pm k)}{W(\pm k)}
=\mp\frac{f_+(y,\pm k) f_-(x,\pm k)T(\pm k)}{2\I k}
\]
for all  $x\leq y$ (and the positions of $x,y$ reversed if $x>y$).

Therefore, in the case  $x\le y$, the integral kernel of $\E^{-\I tH}P_{k_0}(H)$ is given by
\begin{align*}
[\E^{-\I tH}P_{k_0}](x,y)&= \frac{\I}{\pi} \int_{-k_0}^{k_0}
\E^{-\I t k^2}\,\frac{f_+(y,k) f_-(x,k)T(k)}{2\I k} k \,dk\\
&=\frac{1}{2\pi} \int_{-k_0}^{k_0} \E^{-\I(t k^2-|y-x|k)}h_+(y,k) h_-(x,k) T(k)dk,
\end{align*}
where $P_{k_0}=P_H([0,k_0^2])$ is the projection onto energies in the interval $[0,k_0^2]$.
Taking the limit $k_0\to\infty$ we obtain
\begin{align}
\nonumber [\E^{-\I tH}P_{c}](x,y) &= \lim_{k_0\to\infty} [\E^{-\I tH}P_{k_0}](x,y)\\
&=\frac{1}{2\pi} \int_{-\infty}^{\infty} \E^{-\I(t k^2-|y-x|k)}h_+(y,k) h_-(x,k) T(k)dk, \label{integr}
\end{align}
where the integral is to be understood as an improper integral for $t\in\R$ (if $\im(t)<0$ the integral converges absolutely and the
limit is of course not needed). In fact, the convergence of the integral for $t\in\R$  will follow from Lemma~\ref{GT} below, and Lemma \ref{lemconst}
will imply  $|[\E^{-\I tH}P_{k_0}](x,y)| \le C |t|^{-1/2}$. Thus,  we can use dominated convergence to
conclude that the right-hand side of \eqref{integr} is indeed the kernel of $\E^{-\I tH}P_{c}$ on
$L^1(\R)\cap L^2(\R)$.

\begin{lemma}\label{GT}
Let $\psi(x,y,k)$ be defined by  \eqref{psi} and let $\hat\psi(x,y,p)$ be its Fourier transform  with respect to $k$. Then the following representation is valid for $\im(t)\le 0$:
\begin{equation}\label{GT-rep}
[\E^{-\I tH}P_{c}](x,y) = \frac{1}{\sqrt{4\pi\I t}}\left( \E^{-\frac{|x-y|^2}{4\I t}} +
\int_\R \E^{-\frac{(p+|x-y|)^2}{4\I t}} \hat{\psi}(x,y,p) dp\right).
\end{equation}
\end{lemma}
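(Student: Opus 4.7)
My plan is to start from the improper-integral representation \eqref{integr}, write the integrand as $1+\psi(x,y,k)$ using the definition \eqref{psi}, and compute the two resulting pieces separately. The first piece is the free Schr\"odinger kernel, which is a classical Gaussian Fresnel integral: after completing the square in $-\I(tk^2-|y-x|k)$ and using $\int_\R \E^{-\I t u^2}du=\sqrt{\pi/(\I t)}$ one obtains
\begin{equation*}
\frac{1}{2\pi}\int_\R \E^{-\I(tk^2-|y-x|k)}\,dk=\frac{1}{\sqrt{4\pi \I t}}\,\E^{\I|x-y|^2/(4t)}=\frac{1}{\sqrt{4\pi\I t}}\,\E^{-|x-y|^2/(4\I t)},
\end{equation*}
which is precisely the first term on the right-hand side of \eqref{GT-rep}.

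For the second piece I invoke Theorem~\ref{MT} together with \eqref{alg1}: the functions $h_+(y,\cdot)-1$, $h_-(x,\cdot)-1$ and $T(\cdot)-1$ all lie in $\mathcal{A}$, so their product minus one, namely $\psi(x,y,\cdot)$, also lies in $\mathcal{A}$ (since $\mathcal{A}$ is a Banach algebra). Thus $\psi(x,y,k)=\int_\R \E^{\I kp}\hat\psi(x,y,p)\,dp$ with $\hat\psi(x,y,\cdot)\in L^1(\R)$. Substituting this representation and swapping the order of integration transforms the scattering piece into
\begin{equation*}
\frac{1}{2\pi}\int_\R\hat\psi(x,y,p)\int_\R \E^{-\I tk^2+\I k(|y-x|+p)}\,dk\,dp,
\end{equation*}
and evaluating the inner Fresnel integral (again by completing the square, with $a=|x-y|+p$) yields the factor $\sqrt{\pi/(\I t)}\,\E^{\I(p+|x-y|)^2/(4t)}$, reproducing the second term of \eqref{GT-rep}.

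The only delicate point is justifying Fubini's theorem, because the $k$-integral in \eqref{integr} is merely conditionally convergent (the integrand is oscillatory and not in $L^1(\R)$), so one cannot directly integrate against $\hat\psi(x,y,p)\,dp$. My plan is the standard regularization: insert a convergence factor $\E^{-\varepsilon k^2}$ into the original integral, which makes the integrand absolutely integrable in $(k,p)\in\R^2$ (since $\hat\psi(x,y,\cdot)\in L^1$), so Fubini applies. The inner $k$-integral becomes a true Gaussian, evaluating to a constant multiple of $\E^{-(|x-y|+p)^2/(4(\I t+\varepsilon))}$, uniformly bounded in $\varepsilon\ge 0$. Then dominated convergence in the $p$-integral, together with the pointwise limit of the Fresnel integrals as $\varepsilon\to 0^+$, gives \eqref{GT-rep}.

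The main obstacle I anticipate is exactly this justification of the interchange of integrals; everything else is a direct calculation. Once the regularization argument is written down carefully — using $|\E^{-(|x-y|+p)^2/(4(\I t+\varepsilon))}|\le 1$ to dominate the integrand uniformly in $\varepsilon$ — the lemma follows immediately from the membership $\psi(x,y,\cdot)\in\mathcal{A}$ established via Theorem~\ref{MT} and the two Gaussian integral evaluations.
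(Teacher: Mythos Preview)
Your argument is correct and follows the same outline as the paper's proof: split the integrand as $1+\psi$, evaluate the free part as the standard Fresnel integral, write $\psi=\mathcal F^{-1}\hat\psi$ with $\hat\psi\in L^1$, and justify swapping the $k$- and $p$-integrals via a regularization together with dominated convergence.

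The only difference is the choice of regularization. The paper truncates to $\int_{-k_0}^{k_0}$, which is exactly the definition of the improper integral in \eqref{integr}; Fubini is then immediate on the compact $k$-range, the inner integral is expressed through the Fresnel functions $C,S$, and their boundedness supplies the dominating function for $k_0\to\infty$. Your Gaussian damping $\E^{-\varepsilon k^2}$ gives a cleaner inner integral (an exact Gaussian) and an equally clean dominating bound $|\E^{-(|x-y|+p)^2/(4(\varepsilon+\I t))}|\le 1$, but it tacitly uses one extra fact: that $\lim_{\varepsilon\to 0^+}\int_\R \E^{-\varepsilon k^2}g(k)\,dk$ equals the improper integral $\lim_{k_0\to\infty}\int_{-k_0}^{k_0}g(k)\,dk$ for the integrand in \eqref{integr}. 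This Abelian statement is standard (integrate by parts against the bounded primitive $\int_0^k g$), but you should state it explicitly, since \eqref{integr} is only defined as an improper integral. With that one line added, your proof is complete and equivalent to the paper's.
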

\begin{proof}
By \eqref{integr} we have
\begin{align*}
[\E^{-\I t H}P_{c}](x,y)& =
 \frac{1}{2\pi} \int_{-\infty}^{\infty} \E^{-\I (t k^2 -|y-x| k)} (1+\psi(x,y,k)) dk.
\end{align*}
Since the
first part of the integral is easy to compute we only focus on the second part containing $\psi$. Using
Fubini's theorem this integral is given by
\begin{align*}
 &\frac{1}{2\pi}\lim\limits_{k_0\to\infty}  \int_{-k_0}^{k_0}  \int_\R \E^{-\I (t k^2 -|y-x| k - kp)} \hat{\psi}(x,y,p) dp\, dk =\\
&\frac{1}{2\pi} \lim\limits_{k_0\to\infty}  \int_\R \E^{\I\frac{(p+|y-x|)^2}{4t}}
\int_{-k_0}^{k_0} \E^{-\I \frac{(2kt-|y-x|-p)^2}{4t}} dk\, \hat{\psi}(x,y,p) dp =\\
& \quad = \frac{1}{2\sqrt{4\pi\I t}} \lim\limits_{k_0\to\infty} \int_\R \E^{\I\frac{(p+|y-x|)^2}{4t}}
\big( \erf(q_+) + \erf(q_-)\big) \hat{\psi}(x,y,p) dp,
\end{align*}
where $q_\pm = \frac{k_0}{2} \sqrt{4\I t} \pm \I\frac{(p+|x-y|)}{\sqrt{4\I t}}$
and $\erf(z)$ is the error function \cite[\S 7.2]{dlmf}.
Using $\erf(z) = 1 + O(\E^{-z^2})$ as $z\to\infty$ with $|\arg(z)|< \frac{3\pi}{4}$ (\cite[(7.12.1)]{dlmf})
the claim follows from dominated convergence.
\end{proof}

\begin{proof}[Proof of Theorem~\ref{Main}]
Since 
\[
\Vert\E^{-\I tH}P_{c}\Vert_{L^1\to L^{\infty}}=\sup\limits_{\Vert f\Vert_{L^1}=1,\Vert g\Vert_{L^1}=1}
\langle f, \E^{-\I tH}P_{c}g\rangle
= \sup\limits_{x,y}|[\E^{-\I tH}P_{c}](x,y)|
\]
the claim follows from Lemmas \ref{GT} and \ref{lemconst}.
\end{proof}

In fact we have established the slightly stronger result which covers also the heat semigroup:

\begin{corollary}
Let $V\in L^1_1(\R)$. Then \[
\Vert \E^{- \I tH}P_{k_0}\Vert_{L^1\to L^\infty} \le C |t|^{-1/2},\quad \im(t)\le 0,
\]
for every $0\le k_0\le \infty$.
\end{corollary}

\begin{proof}
Using the representation for $[\E^{-\I tH}P_{k_0}](x,y)$ from the proof of Lemma~\ref{GT}, together with
boundedness of $\erf(q_\pm)$ in the region under consideration and \eqref{thh}, shows $|[\E^{-\I tH}P_{k_0}](x,y)| \le C |t|^{-1/2}$
as desired.
\end{proof}

\section{The Schr\"odinger equation (non-resonant case)}
\label{lln-sec}
In this section we consider the non-resonant case and prove the dispersive decay estimate \eqref{full-new}.
We begin by representing the jump of the resolvent across the spectrum as
\[
[\cR(k^2+\I 0)- \cR(k^2-\I 0)](x,y)=
\frac{ T(k)f_+(y,k) f_-(x,k) + \overline{T(k)f_+ (y,k)} \overline{f_- (x,k)}}{-2\I k},
\]
for $x\le y$ and $k>0$.
The scattering relations \eqref{scat-rel} imply
\begin{align*}
 f_-(x,k) &=T(-k) f_+(x,-k)-R_-(-k)f_-(x,-k),\\
\overline{f_+(y,k)} &=T(k) f_-(y,k)-R_+(k)f_+(y,k)
\end{align*}
and using the consistency relation $T \overline R_- + \overline {T } R_+=0$
we arrive at the formula (cf.\ \cite[p.13]{schlag})
\begin{equation}\label{WS}
[\cR(k^2+\I 0)- \cR(k^2-\I 0)](x,y)=\frac{|T(k)|^2}{-2\I k}
[f_+(y,k)f_+(x,-k)+f_-(y,k)f_-(x,-k)].
\end{equation}
Inserting this into \eqref{PP} gives
\begin{align*}
[\E^{-\I tH}P_{c}](x,y) &= [\cK_+(t)](x,y)+[\cK_-(t)](x,y),\\ 
[\cK_\pm(t)](x,y) &= \frac{1}{4\pi} \int_{-\infty}^{\infty} \E^{-\I (tk^2 \mp |y-x| k})|T(k)|^2 h_\pm(y,k) h_\pm(x,-k) dk.
\end{align*}
Using integration by parts we get
\begin{align*}
 [\cK_\pm(t)](x,y)
=& \pm\frac{|y-x|}{8\pi t} \int_{-\infty}^{\infty} \E^{-\I( tk^2\mp|y-x|k)}\frac{|T(k)|^2}{k}h_\pm(y,k) h_\pm(x,-k) dk\\
& {} -\frac{1}{8\pi\I t} \int_{-\infty}^{\infty} \E^{-\I(t k^2\mp|y-x|k)}
\frac{|T(k)|^2}{k^2}h_\pm(y,k) h_\pm(x,-k) dk\\
& {} +\frac{1}{8\pi\I t} \int_{-\infty}^{\infty} \E^{-\I(t k^2\mp|y-x|k)}\frac
{\frac{\partial}{\partial k}\Big[|T(k)|^2h_\pm(y,k) h_\pm(x,-k)\Big]}{k} dk
\end{align*}
Applying the arguments from the proof of Lemma \ref{GT} we obtain
\begin{equation}\label{cK-rep}
 [\cK_\pm(t)](x,y)=\frac{t^{-3/2}}{8\sqrt{\pi\I}}\int_{\R}
 \E^{\I\frac{(p+|x-y|)^2}{4t}} \sum\limits_{j=1}^3\hat{\psi}_j^{\pm}(x,y,p) dp
\end{equation}
where $\hat{\psi}_j^{\pm}(x,y,p)$, $j=1,2,3$, are the Fourier transforms of the functions
\begin{align*}
\psi_1^{\pm}(x,y,k)&=\pm|y-x|\frac{|T(k)|^2}{k}h_\pm(y,k) h_\pm(x,-k),\\
\psi_2^{\pm}(x,y,k)&=\I\frac{|T(k)|^2}{k^2}h_\pm(y,k) h_\pm(x,-k),\\
\psi_3^{\pm}(x,y,k)&=-\I\frac{\frac{\partial}{\partial k}\Big[|T(k)|^2h_\pm(y,k) h_\pm(x,-k)\Big]}{k},
\end{align*}
respectively. To estimate their $\mathcal A$ norms we first show
\begin{lemma}\label{lemma-n}
Let $V\in L^1_2$ and  $W(0)\neq 0$. Then
$T(k)h _\pm(x,k)/k \in \mathcal A$,  and
\begin{equation}\label{Test1}
\Big\Vert\frac{T(k)h _\pm(x,k)}{k}\Big\Vert_{\mathcal A}\le C (1 + |x|),\quad x\in \R.
\end{equation}
\end{lemma}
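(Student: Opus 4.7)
The plan is to first show $T(k)/k \in \mathcal{A}$, then handle the case $\pm x \geq 0$ directly, and finally reduce the opposite sign to the first via the scattering relation \eqref{scat-rel}, carefully extracting the $1/k$ singularity. Since $T(k)/k = 2\I/W(k)$ and the non-resonant assumption gives $W(k) \neq 0$ on $\R$, I multiply by the auxiliary function $\nu(k) = 1/(\I k - 1) \in \mathcal{A}$ from \eqref{defnu}. By \eqref{Wr} and \eqref{alg1}, $\nu W \in \mathcal{A}_1 \setminus \mathcal{A}$ is non-vanishing on $\R$ and tends to the nonzero constant $2$ at infinity, so Wiener's theorem gives $(\nu W)^{-1} \in \mathcal{A}_1$; consequently $1/W = \nu \cdot (\nu W)^{-1} \in \mathcal{A}$ by the ideal property of $\mathcal{A}$ in $\mathcal{A}_1$, and hence $T(k)/k \in \mathcal{A}$.

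For $\pm x \geq 0$ the uniform bound \eqref{est31} gives $\|h_\pm(x,\cdot)\|_{\mathcal{A}_1} \leq 1 + C_\pm$, so $T(k) h_\pm(x,k)/k \in \mathcal{A}$ with $\mathcal{A}$-norm bounded by a constant independent of $x$. For the opposite sign (say $x \leq 0$ in the $+$ case), I would apply the scattering relation \eqref{scat-rel} to rewrite
\begin{equation*}
T(k) h_+(x,k) = R_-(k) h_-(x,k)\bigl(e^{-2\I k x} - 1\bigr) + \bigl[R_-(k) h_-(x,k) + h_-(x,-k)\bigr].
\end{equation*}
After division by $k$, the first summand uses $(e^{-2\I k x} - 1)/k = \I \int_0^{-2x} e^{\I k s}\,ds$, which has $\mathcal{A}$-norm $2|x|$; coupled with $R_- \in \mathcal{A}$ (Theorem~\ref{MT}) and the uniform $\mathcal{A}_1$-bound on $h_-(x,\cdot)$ for $x \leq 0$, this contributes a term of $\mathcal{A}$-norm $\leq C|x|$.

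The bracketed remainder vanishes at $k = 0$: indeed, $T(0) = 0$ together with the scattering relation at $k = 0$ forces $R_-(0) = -1$. I split it as $(R_-(k)+1) h_-(x,0) + R_-(k)[h_-(x,k) - h_-(x,0)] + [h_-(x,-k) - h_-(x,0)]$, each piece vanishing at $k = 0$. The two difference quotients $(h_-(x, \pm k) - h_-(x,0))/k$ are controlled via \eqref{Jost}: their $\mathcal{A}$-norms are bounded by $2\int_{-\infty}^0 |y B_-(x,y)|\,dy$, which by \eqref{est1}--\eqref{est2} and the substitution $u = x + y$ is $\leq C(1+|x|)$ for $x \leq 0$ precisely when $V \in L^1_2$. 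The hard part is the remaining piece $(R_-(k)+1)/k$: writing $R_- \in \mathcal{A}$ as $R_-(k) = \int e^{\I k s} \hat R_-(s)\,ds$ and using $R_-(0) = -1$, one obtains $R_-(k) + 1 = \int \hat R_-(s)(e^{\I k s} - 1)\,ds$, so $\|(R_-(k)+1)/k\|_{\mathcal{A}} \leq \|s \hat R_-\|_{L^1}$. The task therefore reduces to upgrading Theorem~\ref{MT} to $\hat R_- \in L^1_1(\R)$ under the stronger hypothesis $V \in L^1_2$, which I expect to obtain by a first-moment refinement of the Guseinov identity of Lemma~\ref{lem:gus}; combining all estimates then yields the bound $C(1+|x|)$.
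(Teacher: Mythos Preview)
Your overall architecture matches the paper's: first $T(k)/k\in\mathcal A$ via Wiener's theorem applied to $\nu W$, then the easy half-line $\pm x\ge 0$ from \eqref{est31}, and finally the opposite half-line via the scattering relation \eqref{scat-rel}. The decomposition you choose differs slightly from the paper's (you anchor the difference quotients at $h_-(x,0)$, while the paper uses $h_-(x,-k)$ and writes
\[
T(k)h_\pm(x,k)=(R_\mp(k)+1)h_\mp(x,k)\E^{\mp 2\I kx}-(h_\mp(x,k)-h_\mp(x,-k))\E^{\mp 2\I kx}+h_\mp(x,-k)(1-\E^{\mp 2\I kx})),
\]
but either decomposition is fine and both lead to the same three ingredients: the exponential difference $(1-\E^{\mp 2\I kx})/k$, a difference quotient for $h_\mp$, and the factor $(R_\mp(k)+1)/k$.

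The genuine gap is your treatment of $(R_-(k)+1)/k$. You reduce it to the stronger claim $\hat R_-\in L^1_1(\R)$, which you do not prove but only ``expect to obtain by a first-moment refinement of the Guseinov identity''. This is both heavier than necessary and points to the wrong tool: Lemma~\ref{lem:gus} is the device used for the \emph{resonant} case of Theorem~\ref{MT}, and nothing in it is needed here. The paper's route is entirely elementary and stays at the level of the Wronskian formulas \eqref{Wr}--\eqref{Wrpm}: one writes
\[
\frac{R_\mp(k)+1}{k}=\frac{1}{W(k)}\cdot\frac{W(k)\pm W_\mp(k)}{k},
\]
and expands $(W\pm W_\mp)/k$ as
\[
2\I h_+(k)h_-(k)\pm h_\pm(k)\,\frac{h'_\mp(k)-h'_\mp(-k)}{k}\mp h'_\pm(k)\,\frac{h_\mp(k)-h_\mp(-k)}{k}.
\]
The two difference quotients here (at $x=0$) lie in $\mathcal A$ by the same computation you did for $h_-(x,\pm k)-h_-(x,0)$, using \eqref{est1}--\eqref{est11} and $V\in L^1_2$; hence $(W\pm W_\mp)/k-2\I\in\mathcal A$. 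Since $W^{-1}\in\mathcal A$ in the non-resonant case, $(R_\mp+1)/k\in\mathcal A$ follows immediately, with no need to control the first moment of $\hat R_\mp$. I recommend you replace your conjectural step by this direct Wronskian computation; then the rest of your argument goes through.
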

\begin{proof}
Since $\frac{T(k)}{k}=\frac{2\I\nu(k)}{\nu(k)W(k)}\in \mathcal A$ (recall \eqref{defnu}), then for $x\in\R_\pm$ the
bound \eqref{Test1} follows from \eqref{est31}.
Consider the case $x\in \R_\mp$. The scattering relations \eqref{scat-rel} imply
\begin{align}\nonumber
T(k)h_\pm(x,k)
= &(R_\mp(k)+1)h_\mp(x,k)\E^{\mp 2\I k x}-
(h_\mp(x,k)-h_\mp(x,-k))\E^{\mp 2\I k x} \\\label{proper8}
&  + h_\mp(x,-k)(1 -\E^{\mp 2\I k x}).
\end{align}
Using \eqref{Jost} we obtain
\begin{align*}
&\frac{h_\mp(x,k)-h_\mp(x,-k)}{k}=
\mp\int\limits_{0}^{\mp\infty}B_\mp(x,r)\frac{\E^{\mp\I kr}-\E^{\pm\I kr}}{k}dr\\
&=\I\int\limits_{0}^{\mp\infty}B_\mp(x,r)\int\limits_{-r}^{r}\E^{\I ky}dy\, dr
= \I\int\limits_{-\infty}^{\infty}\Big(\int\limits_{\mp |y|}^
{\mp\infty} B_\mp(x,r)dr\Big)\E^{\I ky}dy.
\end{align*}
Next, observe that formula  \eqref{est1} implies that
if $V\in L_2^1$, then $B_\mp(x,.)\in L_{1}^1(\R_\mp)$ for any fixed $x$,
and consequently
\[
S_\mp(x,y)=\int_{y}^{\mp\infty}|B_\mp(x,r)|dr\in L^1(\R_\mp).
\]
Based on this observation we get
\begin{equation}\label{proper5}
\Big\Vert\frac{h_\mp(x,k)-h_\mp(x,-k)}{k}\Big\Vert_{\mathcal A}\le C ,\quad x\in\R_\mp.
\end{equation}
By the same reasons formula \eqref{est11} implies
\begin{equation}\label{proper55}
\frac{h_\mp^\prime(0,k)-h_\mp^\prime(0,-k)}{k}\in\mathcal A.
\end{equation}
Next, from \eqref{Wr} and \eqref{Wrpm} it follows
\begin{align*}
\frac{W(k)\mp W_\pm(k)}{k} =&2\I h_+(k)h_-(k) +\\
& {} +\frac{h_-(k) h_+^\prime(k) - h_-^\prime(k)h_+(k)\mp
h_\mp(k)h^\prime_\pm(-k) \pm h_\pm(-k)h_\mp^\prime(k)}{k}=\\
=& 2\I h_+(k)h_-(k)\pm h_\mp(k)\frac{h^\prime_\pm(k) -h^\prime_\pm(-k)}{k}\mp
h_\mp^\prime(k)\frac{h_\pm(k) -h_\pm(-k)}{k}.
\end{align*}
Applying \eqref{alg1}, \eqref{proper5}, \eqref{proper55} we get
\[
\left(\frac{W(k)\mp W_\pm(k)}{k} -2\I\right)\in\mathcal A.
\]
As is shown in Theorem \ref{MT} in
the non-resonant case $W^{-1}(k)\in\mathcal A$. Thus \begin{equation}\label{proper9}
\frac{R_\pm(k)+1}{k}=\frac{1}{W(k)}\frac{W(k)\mp W_\mp(k)}{k}\in\mathcal A.
\end{equation}
Next, $\frac{1 -\E^{\mp 2\I kx}}{\I k}$ is the Fourier transform of
the indicator function of $[0,2x]$, therefore
\begin{equation}\label{proper10}
\Big\Vert\frac{1 -\E^{\mp 2\I k x}}{k}\Big\Vert_{\mathcal A}\leq  2|x|.
\end{equation}
Finally, substituting \eqref{proper5}, \eqref{proper9}, and \eqref{proper10} into \eqref{proper8} we obtain \eqref{Test1}.
\end{proof}
Since we have already seen the estimate $\|T(k) h_\pm(x,k)\|_{\mathcal A}\le C$ in the proof of Theorem~\ref{Main},
this lemma immediately implies that
\begin{equation}\label{eq:12}
\Vert \hat{\psi}_j^{\pm}(x,y,\cdot)\Vert_{L^1}\le C(1+|x|)(1+|y|),\quad j=1,2.
\end{equation}
To estimate $\Vert \hat{\psi}_3^{\pm}(x,y,\cdot)\Vert_{L^1}$
we need one more property.
\begin{lemma}\label{lem:proizv} Let $V\in L_2^1$ and $W(0)\neq 0$.
Then $\frac{\partial}{\partial k}(T(k)h_\pm(x,k))\in\mathcal A$ with
\[
\Big\Vert \frac{\partial}{\partial k}(T(k)h_\pm(x,k))\Big\Vert_{\mathcal A}\leq C(1+|x|),\quad x\in\R.
\]
\end{lemma}
\begin{proof}
The representation \eqref{Jost} and the bounds \eqref{est1}--\eqref{est11} imply
\begin{equation} \label{alg-dif}
\frac{\partial}{\partial k} h_\pm(x,k)=\dot h_\pm(x,k)\in \mathcal A,\quad
\frac{\partial}{\partial k} h'_\pm(x,k)
\in \mathcal A \quad\text{if } V\in L^1_2
\end{equation}
with 
\begin{equation}\label{est11-d} 
\|\frac{\partial}{\partial k} h'_\pm(x,\cdot)\|_{\mathcal A}+\|\dot h_\pm(x,\cdot)\|_{\mathcal A}\leq C,\quad x\in \R_\pm.
\end{equation}
Therefore $\frac{d}{d k} W_{\pm}(k):=\dot W_{\pm}(k)\in \mathcal A$. 
Further, from \eqref{Wr} and \eqref{alg-dif} it follows that
$\nu(k)\dot W(k)\in\mathcal A$, where $\nu(k)$ is defined by \eqref{defnu}. Since in the nonresonant case $(\nu(k)W(k))^{-1}\in\mathcal A_1$ and $W^{-1}(k)\in\mathcal A$ then
\begin{equation}\label{imp12}
\dot T(k)=\frac{1}{W(k)}\left(2\I-\dot W(k)T(k)\right)\in\mathcal A,\quad \dot R_\pm(k)\in\mathcal A.
\end{equation}
Thus for $x\in \mathbb R_\pm$ the statement of the Lemma is evident in view of \eqref{est31}, \eqref{imp12}
and \eqref{est11-d}.
To get it for $x\in \mathbb R_\mp$ we use \eqref{imp12}, \eqref{est11-d}, and again the scattering relations \eqref{scat-rel}
which gives
\[
\frac{\partial}{\partial k}(T(k)h_\pm(x,k))=\E^{\mp 2\I kx}\left(\frac{\partial}{\partial k} (R_\mp(k)h_\mp(x,k)) \mp 2\I x R_\mp(k)h_\mp(x,k)\right)+\dot h_\mp(x,-k).
\]
\end{proof}
As pointed out in the proof of Theorem \ref{Main} the estimate $\|T(k) h_\pm(x,k)\|_{\mathcal A_1}\le C$ is valid for $x\in\R$.  This and  Lemma \ref{lem:proizv} imply
\begin{equation}\label{3}
\Vert \hat{\psi}_3^{\pm}(x,y,\cdot)\Vert_{L^1}\le C(1+|x|)(1+|y|).
\end{equation}
Finally, combining \eqref{cK-rep}, \eqref{eq:12},  \eqref{3} and Lemma \ref{GT} we obtain
\[
|[\cK_\pm(t)](x,y)|\le Ct^{-3/2}(1+|x|)(1+|y|),\quad t\ge 1,
\]
which shows \eqref{full-new} and finishes the proof of Theorem \ref{Main-new}.
\section{The Klein--Gordon equation}
\label{KG-sect}
In this section  we prove the estimate \eqref{full1} for the Klein--Gordon equation \eqref{KGEv}.
We estimate the low-energy and high-energy components of the solution separately. Equation \eqref{full1}
will immediately follow from the two theorems below.
\begin{theorem}\label{thKG1}
Assume $V\in L^1_1(\R)$. Then for any smooth function $\zeta$  with bounded support the following decay holds
\[
\big\|\E^{-\I t\mathbf{H}}{\bf P}_c\,\zeta(\mathbf{H}^2)\big\|_{L^1\to L^\infty}
={\mathcal O}(t^{-1/2}),\quad t\to \infty.
\]
\end{theorem}
\begin{theorem}\label{thKG} Assume $V\in L^1_1(\R)$ and let $\xi(x)$ be a smooth function such that $\xi(x)=0$ for $x\le m^2+1$ and
$\xi(x)=1$ for $x\geq m^2 + 2$. Then
\[
\big\|[\E^{-\I t\mathbf{H}}]^{12}\,\xi(\mathbf{H}^2)\big\|_{H^{\frac 12,1}\to L^\infty}={\mathcal O}(t^{-1/2}),
\quad t\to \infty.
\]
\end{theorem}
As a consequence of  \eqref{full1} we get 
\begin{corollary}\label{interp}
Assume $V\in L^1_1(\R)$. Then \eqref{n2} holds for any $p\in [2,\infty]$. Namely 
\begin{equation}\label{interp-est}
\Vert [\E^{-\I t\mathbf{H}}{\bf P}_c]^{12}\Vert_{B^{\frac 12-\frac 3p}_{p',p'}\to L^p}
=\mathcal{O}(t^{-\frac 12+\frac 1p}),\quad t\to\infty,\quad \frac 1{p'}+\frac 1p=1.
\end{equation}
\end{corollary}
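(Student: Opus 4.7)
The plan is to deduce \eqref{interp-est} by complex interpolation between the two endpoints $p=\infty$ and $p=2$. The endpoint $p=\infty$ is essentially free: Theorem~\ref{Main1}(i) together with the embedding $B^{1/2}_{1,1}\subset H^{1/2,1}$ recalled after Theorem~\ref{Main1} gives
\[
\bigl\Vert [\E^{-\I t\mathbf{H}}{\bf P}_c]^{12}\bigr\Vert_{B^{1/2}_{1,1}\to L^\infty}=\mathcal{O}(t^{-1/2}).
\]

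For the endpoint $p=2$ I would establish the uniform-in-$t$ bound
\[
\bigl\Vert [\E^{-\I t\mathbf{H}}{\bf P}_c]^{12}\bigr\Vert_{H^{-1}\to L^2}\leq C.
\]
Since $\mathbf{H}^2=(H+m^2)\,I$, functional calculus identifies $[\E^{-\I t\mathbf{H}}]^{12}=\sin(t\omega)/\omega$ with $\omega:=\sqrt{H+m^2}$, and energy conservation applied to the Cauchy data $(0,f)$ yields $\Vert\omega\,[\E^{-\I t\mathbf{H}}]^{12}f\Vert_{L^2}\leq\Vert f\Vert_{L^2}$. Dually this reads $\Vert [\E^{-\I t\mathbf{H}}]^{12}g\Vert_{L^2}\leq\Vert \omega^{-1}g\Vert_{L^2}$, and on $P_c L^2$ the spectral bound $\omega\geq m>0$ together with the form-boundedness of $V\in L^1_1$ relative to $-d^2/dx^2$ (valid in one space dimension) gives the equivalence $\Vert\omega^{-1}\cdot\Vert_{L^2}\simeq\Vert\cdot\Vert_{H^{-1}}$ on the continuous subspace, completing this endpoint.

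With both endpoints in hand I would apply complex interpolation in the Besov scale \cite[Theorem~6.4.5]{BL}, together with the standard identification $H^{-1}=B^{-1}_{2,2}$. For $\theta=2/p\in(0,1]$ one has
\[
\bigl[B^{1/2}_{1,1},\,B^{-1}_{2,2}\bigr]_\theta=B^{\frac12-\frac{3}{p}}_{p',p'},\qquad \bigl[L^\infty,\,L^2\bigr]_\theta=L^p,
\]
matching the spaces in \eqref{interp-est}, and the interpolated operator norm is bounded by $(Ct^{-1/2})^{1-\theta}\cdot C^\theta=\mathcal{O}(t^{-1/2+1/p})$, as claimed.

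I expect the only delicate step to be the $p=2$ endpoint, and specifically the equivalence of the $(H+m^2)^{-1/2}$-generated Hilbert-space norm with the standard $H^{-1}(\R)$ norm on $P_c L^2$. The Besov interpolation and the index bookkeeping are entirely routine once both endpoints are established.
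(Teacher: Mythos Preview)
Your proposal is correct and follows essentially the same route as the paper: establish the $p=\infty$ endpoint from Theorem~\ref{Main1}(i) via the embedding $B^{1/2}_{1,1}\subset H^{1/2,1}$, obtain the $p=2$ endpoint $H^{-1}\to L^2$ from energy conservation together with relative form-boundedness of $V$ (and duality), identify $H^{-1}=B^{-1}_{2,2}$, and interpolate. The only cosmetic difference is that you invoke complex interpolation \cite[Theorem~6.4.5]{BL} whereas the paper phrases it as real interpolation; your choice is in fact the more directly applicable one here, since complex interpolation of Besov spaces interpolates all three indices and yields $B^{\frac12-\frac3p}_{p',p'}$ on the nose.
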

\begin{proof}
Recall that the Klein--Gordon equation preserves the energy
\[
\|\dot{\psi}\|_{L^2}^2 + \spr{\psi}{H\psi}_{L^2} + m^2 \|\psi\|_{L^2}^2.
\]
Since $[\E^{-\I t\mathbf{H}}{\bf P}_c]^{12} \pi_0$ corresponds to the initial condition
$(\psi(0),\dot{\psi}(0))=(0,\pi_0)$ with $\pi_0 = P_c(H)\pi_0$ we obtain the estimate
$\spr{\psi}{H\psi}_{L^2} + m^2 \|\psi\|_{L^2}^2 \le \|\pi_0\|_{L^2}^2$
in this case. Moreover, since for $V\in L^1$ the multiplication operator $V$ is relatively form bounded
with bound $0$ with respect to $H_0=-\frac{d^2}{dx^2}$ (\cite[Lemma~9.33]{tschroe}), the graph norms
of $H$ and $H_0$ are equivalent, and we obtain $\|\psi\|_{H^1} \le C \|\pi_0\|_{L^2}$.
Hence by duality we also get 
\begin{equation}\label{fulle}
\Vert [\E^{-\I t\mathbf{H}}{\bf P}_c]^{12}\Vert_{H^{-1}\to L^2}
=\mathcal{O}(1),\quad t\to\infty,\quad H^{-1}=H^{-1,2}
\end{equation}
Since $H^{-1}=B^{-1}_{2,2}$ due to \cite[Theorem 2.3.2 (d)]{T},
real interpolation between \eqref{full1} and \eqref{fulle} gives 
\eqref{interp-est} .
\end{proof}
\subsection{Low-energy decay}
Here we prove Theorem \ref{thKG1}.
We will need a small variant of the van der Corput lemma which is of independent interest.
\begin{lemma}\label{lem:vC}
Consider the oscillatory integral
\[
I(t) = \int_a^b \E^{\I t \phi(k)} f(k) dk,
\]
where $\phi(k)$ is real-valued function.
If $\phi''(k)\not =0$ in [a,b] and $f\in\mathcal{A}_1$, then
\[
|I(t)| \le C_2 [t\min_{a\le k\le b}|\phi''(k)|]^{-1/2}\|f\|_{\mathcal A_1}, \quad t\ge 1.
\]
where $C_2\le 2^{8/3}$ is the optimal constant from the van der Corput lemma.
\end{lemma}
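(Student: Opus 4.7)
The plan is to reduce the statement to the classical van der Corput lemma with $f \equiv 1$ by exploiting the Fourier-analytic structure of $\mathcal{A}_1$. The crucial observation is that adding a linear term in $k$ to the phase does not affect the second derivative, so the classical bound applies uniformly across the Fourier decomposition.

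First I would write an arbitrary $f \in \mathcal{A}_1$ in the form
\[
f(k) = c + \int_\R \E^{\I k p}\hat{g}(p)\,dp,
\]
with $\|f\|_{\mathcal{A}_1} = |c| + \|\hat g\|_{L^1}$. Inserting this into $I(t)$ and applying Fubini's theorem (justified since the inner $k$-integral is over a bounded interval and $\hat g \in L^1$), I would obtain
\[
I(t) = c\int_a^b \E^{\I t\phi(k)}\,dk + \int_\R \hat{g}(p) \int_a^b \E^{\I t\phi_p(k)}\,dk\,dp,
\]
where $\phi_p(k) := \phi(k) + kp/t$. The key point is that $\phi_p''(k) = \phi''(k)$, so the hypothesis on $\min_{[a,b]}|\phi''|$ is preserved uniformly in $p$.

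Next I would apply the classical van der Corput lemma (with constant function $1$ playing the role of the amplitude, which is the optimal case yielding the constant $C_2 \le 2^{8/3}$) to both the standalone first term and the inner integral of the second term. This gives
\[
\Bigl|\int_a^b \E^{\I t\phi_p(k)}\,dk\Bigr| \le C_2 \bigl[t \min_{[a,b]}|\phi''|\bigr]^{-1/2}
\]
uniformly in $p \in \R$ and similarly for the $c$-term. Combining these bounds,
\[
|I(t)| \le C_2 \bigl[t \min_{[a,b]}|\phi''|\bigr]^{-1/2}\Bigl(|c| + \int_\R|\hat{g}(p)|\,dp\Bigr) = C_2 \bigl[t \min_{[a,b]}|\phi''|\bigr]^{-1/2}\|f\|_{\mathcal{A}_1},
\]
which is exactly the claimed estimate.

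There is essentially no obstacle: the only point requiring care is the application of Fubini, which is immediate because of the boundedness of $[a,b]$ and the integrability of $\hat g$. The rest is the standard van der Corput bound invoked as a black box. In particular, no regularity of $f$ is used beyond membership in $\mathcal{A}_1$; the smoothness required to apply van der Corput in the usual way is transferred entirely to the phase $\phi$, while $f$ is handled via its Fourier representation.
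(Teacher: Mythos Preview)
Your proof is correct and essentially identical to the paper's: both decompose $f$ via its $\mathcal{A}_1$ representation, swap the order of integration, and apply the classical van der Corput lemma to the inner integral with the shifted phase $\phi(k)+vk$ (your $\phi_p$, the paper's $I_{y/t}$), noting that the linear shift leaves $\phi''$ unchanged.
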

\begin{proof}
Writing $f(k)=c+\int_\R e^{\I k y}\hat g(y)dy$ we have
\[
I(t) =
\int_{\R} \hat{g}(y) I_{y/t}(t)dy + c I_0(t), \quad I_v(t)= \int_a^b \E^{\I t (\phi(k) + vk)} dk.
\]
By the van der Corput lemma
\[
|I_v(t)| \le C_2 [t\min_{a\le k\le b}|\phi''(k)|]^{-1/2}, \quad t\ge 1,
\]
where $C_2\le 2^{8/3}$ (cf.\ \cite{rog}) and the claim follows from the definition of the norm in $\mathcal A_1$.
\end{proof}
Note that the analogous lemma extends to higher derivatives and to unbounded intervals (where the integral has to
be understood as an improper Riemann integral).

The resolvent $\mathbf{R}(\omega)$ of the operator \eqref{H} associated with the Klein-Gordon equation \eqref{KGE} can be expressed in terms of the resolvent of the Schr\"odinger
operator $\cR(\omega)=(H-\omega)^{-1}$ as
\[
 \mathbf{R}(\omega)=
   \begin{pmatrix}
 0            &  0\\
  -\I          &  0
\end{pmatrix}
+
\begin{pmatrix}
 \omega &  \I \\
  -\I \omega^2&  \omega
\end{pmatrix} \cR(\omega^2-m^2).
\]
For $\E^{-\I t\mathbf{H}}{\bf P}_c\, \zeta(\mathbf{H}^2)$ the spectral representation of type \eqref{PP} 
holds:
\begin{align}\nonumber
&\E^{-\I t\mathbf{H}}{\bf P}_c\, \zeta(\mathbf{H}^2)
   =\frac 1{2\pi \I}\int\limits_\Gamma  \E^{-\I t\omega} \zeta(\om^2) ( \mathbf{R}(\omega+\I 0)- \mathbf{R}(\omega-\I 0))\,d\omega\\
&\quad =\frac 1{2\pi \I}\int_\Gamma \E^{-\I\om t} \zeta(\omega^2)
\begin{pmatrix}
 \omega &  \I \\\label{PP1}
  -\I \omega^2&  \omega
\end{pmatrix}\left(\cR((\omega+\I0)^2-m^2)-\cR((\omega-\I0)^2-m^2)\right)d\om,
\end{align}
where $\Gamma=(-\infty,-m)\cup (m,\infty)$.
Denote
\[
{\mathcal M}_t(k)=\begin{pmatrix}
 \cos(t\sqrt{k^2 + m^2})  & \ds\frac{\sin(t\sqrt{k^2 + m^2})}{\sqrt{k^2+m^2}}\\
  -\sqrt{k^2+m^2}\sin(t\sqrt{k^2 + m^2})&  \cos(t\sqrt{k^2 + m^2})
\end{pmatrix},
\]
then \eqref{PP1} can be rewritten as
\begin{equation}\label{E-rep}
[\E^{-\I t\mathbf{H}} \mathbf{P}_c\,\zeta(\mathbf{H}^2)](x,y)
=\frac{1}{2\pi}\int_{-\infty}^{\infty}{\mathcal M}_t(k)\,
\E^{\I|y-x|k}\zeta(k^2+m^2)\, (\psi(x,y,k)+1) dk,
\end{equation}
where
the function $\psi(x,y,k)$ is defined by \eqref{psi}.
We obtain oscillatory integrals with the  phase functions $\phi_{\pm}(k)=\pm\sqrt{k^2+m^2}-vk$, where
$v=\frac{|y-x|}t$. The second derivative of $\phi_{\pm}(k)$ satisfies
\[
|\phi''_{\pm}(k)|=\frac{m^2}{\sqrt{(k^2+m^2)^3}}\ge C (m,\zeta),
\quad (k^2 + m^2)\in{\rm supp}\,\zeta.
\]
Since $(k^2 +m^2)^{j/2}\zeta(k^2 + m^2)\in\mathcal A$ for $j=-1,0,1$ and   $\| \psi(x,y,k)\|_{\mathcal{A}}\leq C$ due to  \eqref{thh}, then
Lemma~\ref{lem:vC} implies
\[
\max_{x,y\in\R}|[\E^{-\I t\mathbf{H}} \mathbf{P}_c\,\zeta(\mathbf{H}^2)](x,y)|\le Ct^{-1/2},\quad t\ge 1.
\]
\subsection{High-energy decay}
Here we prove Theorem \ref{thKG}.
Our  proof is based on the following version of \cite[Lemma 2]{MSW}:
\begin{lemma}\label{l1}
Let $\eta(k)$, $k\geq 1$, be a smooth function such that
 $|\eta^{(j)}(k)|\le k^{-j}$ for $j=0,1$. Then for any $g(k)\in \mathcal A_1$,
 $\alpha> 3/2$ and $t\ge 1$
\begin{equation}\label{OI-est}
\sup\limits_{p\in\R}\Big|\int_{1}^\infty\eta(k)\frac{\E^{\pm\I t\sqrt{k^2+m^2}+ \I k p}}
{k^\alpha}g(k)dk\Big|\le C \Vert g\Vert_{\mathcal A_1} t^{-1/2}.
\end{equation}
Moreover,
\begin{equation}\label{OI-est11}
\sup\limits_{p\in\R}\Big|\int_{1}^\infty\eta(k)\frac{\E^{\pm\I t\sqrt{k^2+m^2}+\I k p}}
{k^{3/2}}dk\Big|\le C t^{-1/2}.
\end{equation}
Here the constants $C$ depend on the parameters $m$ and $\alpha$ only.
\end{lemma}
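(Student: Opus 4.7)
\emph{Reduction to $g\equiv 1$.} Writing $g(k)=c+\int_\R e^{iky}\hat g(y)\,dy$ with $|c|+\|\hat g\|_{L^1}=\|g\|_{\mathcal A_1}$ and exchanging the order of integration, the left-hand side of \eqref{OI-est} is dominated by $\|g\|_{\mathcal A_1}\cdot\sup_{p\in\R}|J_\alpha(t,p)|$, where
\[
J_\alpha(t,p):=\int_1^\infty \eta(k)\,\frac{e^{\pm it\sqrt{k^2+m^2}+ikp}}{k^\alpha}\,dk.
\]
Both statements thus reduce to showing $\sup_{p\in\R}|J_\alpha(t,p)|\le Ct^{-1/2}$, with $\alpha>3/2$ in the first case and $\alpha=3/2$ in the second.

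\emph{Proof of \eqref{OI-est} via dyadic van der Corput.} Set $\phi(k)=\pm\sqrt{k^2+m^2}+(p/t)k$, so that $|\phi''(k)|=m^2(k^2+m^2)^{-3/2}\ge c\,m^2\,2^{-3j}$ on the dyadic block $I_j=[2^j,2^{j+1}]$ when $2^j\ge m$. By the hypothesis on $\eta$, the amplitude $a(k):=\eta(k)/k^\alpha$ satisfies $\|a\|_{L^\infty(I_j)}+\|a'\|_{L^1(I_j)}\le C\,2^{-j\alpha}$. The standard second-derivative van der Corput lemma (in its amplitude--bounded-variation form) therefore yields
\[
\Bigl|\int_{I_j}a(k)\,e^{it\phi(k)}\,dk\Bigr|\le \frac{C}{m}\,2^{j(3/2-\alpha)}\,t^{-1/2},
\]
and for $\alpha>3/2$ the sum over $j\ge 0$ is a geometric series with ratio $2^{3/2-\alpha}<1$, bounded by $Ct^{-1/2}$. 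The finitely many blocks with $2^j<m$ are handled by the same lemma using the crude lower bound $|\phi''|\gtrsim 1/m$, giving an additional $O(m^{1/2}t^{-1/2})$ contribution.

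\emph{Proof of \eqref{OI-est11} and the main difficulty.} At the endpoint $\alpha=3/2$ the geometric ratio above equals $1$ and the naive dyadic sum loses a factor $\log t$, so one must exploit the global structure of the phase rather than only the second-derivative bound on individual annuli. I propose the hyperbolic substitution $k=m\sinh\theta$, which rewrites $J_{3/2}(t,p)$ as
\[
m^{-1/2}\int_{\theta_1}^\infty \frac{\cosh\theta}{\sinh^{3/2}\theta}\,\eta(m\sinh\theta)\,e^{im(\pm t\cosh\theta+p\sinh\theta)}\,d\theta,\qquad \theta_1=\sinh^{-1}(1/m).
\]
Inside the light cone ($|p|<t$), the parametrization $\pm t=r\cosh\gamma$, $p=r\sinh\gamma$ with $r=\sqrt{t^2-p^2}$ collapses the exponent to $irm\cosh(\theta+\gamma)$; after translating $u=\theta+\gamma$ this is a Hankel-type integral whose single stationary point at $u=0$ contributes $(rm)^{-1/2}$ times the amplitude there. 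For $|p|\le t/2$ one has $r\asymp t$, so the bound is immediately $O(t^{-1/2})$. Near the light cone the stationary point $\theta_0=-\gamma$ migrates to $+\infty$, but the amplitude $\cosh\theta_0/\sinh^{3/2}\theta_0\asymp e^{-|\gamma|/2}$ exactly compensates the factor $r^{-1/2}\asymp e^{|\gamma|/2}/\sqrt{2t}$, keeping the bound uniform. Outside the light cone ($|p|\ge t$) the analogous parametrization produces the non-stationary phase $rm\sinh(\theta+\gamma)$, and repeated integration by parts gives decay much faster than $t^{-1/2}$. The central obstacle is exactly this borderline interplay: uniformity in $p$ forces one to follow the stationary point all the way to infinity, and only the precise exponential matching between the amplitude decay and the stationary-phase gain—invisible to the dyadic van der Corput bound, but transparent in the Hankel form—preserves the $t^{-1/2}$ rate.
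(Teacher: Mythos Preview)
Your argument for \eqref{OI-est} is correct and arguably cleaner than the paper's. The paper does not decompose dyadically; instead it splits the integral at $k=t$, bounds the tail $\int_t^\infty$ trivially by $C\|g\|_{\mathcal A_1}t^{1-\alpha}$, and on $[1,t]$ integrates by parts against $\Psi(k,t)=\int_1^k e^{\I t\phi}g\,d\tau$, invoking its Lemma~\ref{lem:vC} (van der Corput with $\mathcal A_1$ amplitude) for the bound $|\Psi(k,t)|\le C\|g\|_{\mathcal A_1}t^{-1/2}k^{3/2}$. Your dyadic sum reaches the same conclusion with less manipulation; the paper's route, on the other hand, sets up the object $\Psi$ that is reused at the endpoint $\alpha=3/2$.

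For \eqref{OI-est11} there is a genuine gap. What you wrote is a stationary-phase \emph{heuristic}, not a bound. The assertion that the stationary point ``contributes $(rm)^{-1/2}$ times the amplitude there'' is only the leading-order asymptotic; a proof must control the full integral. If instead you apply the second-derivative van der Corput lemma to $\int b(u)\,e^{\I rm\cosh u}\,du$, the outcome is $C(rm)^{-1/2}\bigl(\|b\|_\infty+\|b'\|_{L^1}\bigr)$, and for your amplitude $b(u)=\cosh(u-\gamma)\sinh^{-3/2}(u-\gamma)\,\eta(m\sinh(u-\gamma))$ on $[\gamma+\theta_1,\infty)$ this quantity is of order one uniformly in $\gamma$ (it is governed by the values near the endpoint $\theta=\theta_1$, not by the small value at the stationary point). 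Since $r=\sqrt{t^2-p^2}\to 0$ as $|p|\uparrow t$, the resulting bound is not uniform in $p$. The exponential matching you observe is real for the leading term but does not by itself control the rest; to make it rigorous you would still need to split the $u$-integral into a neighbourhood of $u=0$ (where the amplitude is genuinely small) and its complement (where $|\sinh u|$ is bounded below and the first-derivative van der Corput lemma applies), with a $\gamma$-dependent cut.

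That splitting is precisely what the paper carries out, in the original variable. After the same integration by parts as for \eqref{OI-est}, the case $\alpha=3/2$ reduces to showing $\int_1^t |\Psi(k,t)|\,|\Lambda(k)|\,dk\le Ct^{-1/2}$ with $\Lambda(k)=O(k^{-5/2})$ and $\Psi(k,t)=\int_0^k e^{\I t\phi}\,d\tau$; this is isolated as Lemma~\ref{prop:psi0}. There one locates the stationary point $\tau_0$ of $\phi$, proves $|\phi'(\tau)|\gtrsim\tau^{-2}$ on $[1,\tau_0/2]\cup[2\tau_0,\infty)$ so that the first-derivative lemma gives $|\Psi(k,t)|\le C(k^2t^{-1}+t^{-1/2})$ there, and uses the second-derivative lemma on $[\tau_0/2,2\tau_0]$, checking that the three pieces combine to $Ct^{-1/2}$ uniformly in $\tau_0$. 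Your hyperbolic substitution is an elegant change of coordinates, but it relocates rather than removes this case analysis.
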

\begin{proof}
Consider "+" case and set $v=-p/t$. To prove \eqref{OI-est} we have to estimate the oscillatory integral
\begin{equation*}
I_\alpha(t)=\int_{1}^\infty k^{-\alpha}\eta(k)\E^{\I t\phi(k)}g(k)dk
\end{equation*}
with the phase function $\phi(k)=\sqrt{k^2+m^2}-vk$.
Split the integral according to
\[
I_\alpha(t)=I_\alpha^1(t)+I_\alpha^2(t)=\int_{1}^{t} +\int_{t}^{\infty}.
\]
Since $\|g\|_\infty\leq \|g\|_{\mathcal A_1}$ we obtain
\begin{equation}\label{It2}
|I_\alpha^2(t)|\le  \Vert g\Vert_{\mathcal A_1} \int_{t}^{\infty}k^{-\alpha} dk
\le C\Vert g\Vert_{\mathcal A_1} t^{1-\alpha}.
\end{equation}
To estimate $I_\alpha^1(t)$ we abbreviate
\[
\Psi(k,t)=\int_{1}^k\E^{\I t\phi(\tau)}g(\tau)d\tau.
\]
Since
\[
\min\limits_{1\le \tau\le k}\phi''(\tau)=\phi''(k)=
\frac{m^2}{(\sqrt{k^2+m^2})^3}\ge \frac{C} {k^3}
\]
Lemma~\ref{lem:vC} implies
\begin{equation}\label{estpsi}
|\Psi(k,t)|\le C \Vert g\Vert_{\mathcal A_1} t^{-1/2}k^{3/2}.
\end{equation}
Integrating $I_\alpha^1(t)$ by parts we get
\[
|I_\alpha^1(t)| \le |\Psi(t,t)| t^{-\alpha} + \int_{1}^{t}|\Psi(k,t)| |\Lambda(k)|dk,
\]
where $\Lambda(k)=\frac{k \eta'(k) - \alpha \eta(k)}{k^{\alpha+1}}$ is a smooth bounded function and $\Lambda(k)=O(k^{-\alpha-1})$ as $k\to\infty$. By \eqref{estpsi}
\[
|I_\alpha^1(t)|\le C\Vert g\Vert_{\mathcal A_1} \left(t^{1-\alpha} + (1+\alpha) t^{-1/2} \int_{1}^{t}k^{1/2 - \alpha}dk\right)
\le C \Vert g\Vert_{\mathcal A_1}t^{-1/2}.
\]
Together with \eqref{It2} this proves \eqref{OI-est}.

Next we turn to \eqref{OI-est11}. Since \eqref{It2} is valid for $\alpha=3/2$ and $g(k)=1$ this follows
from Lemma~\ref{prop:psi0}.
\end{proof}

To prove Theorem \ref{thKG}
we have to show that for any function smooth function $f\in C_0^\infty$ with compact support
\begin{equation}\label{hKG}
\big\|[\E^{-\I t\mathbf{H}}]^{12}\,\xi(\mathbf{H^2})f\big\|_{L^\infty}\le C
t^{-1/2}\|f\|_{H^{\frac 12,1}},\quad t\ge 1.
\end{equation}
The kernel of the resolvent of the free Schr\"odinger operator reads (\cite[\S 7.4]{tschroe})
\[
[\cR_0(k^2\pm \I0)](x,y)=\pm\I\E^{\pm\I k |x-y|}/(2 k),\quad k >0.
\]
Substituting the second resolvent identity $\cR(\lambda)= \cR_0(\lambda)- \cR_0(\lambda)V \cR(\lambda)$
into the 12 entry of \eqref{E-rep}, and taking into account that $\xi(x)=0$ for $x\leq m^2+1$, we obtain
\[
[\E^{-\I t\mathbf{H}}]^{12}\,\xi(\mathbf{H}^2)=\mathbf{K}_0(t)+\mathbf{K}_1(t),
\]
where the kernels of the operators $\mathbf{K}_0(t)$ and $\mathbf{K}_1(t)$ read
\begin{align} \label{bK1}
[\mathbf{K}_0(t)](x,y)=&\frac 1{2\pi}\int_{|k|\geq 1}\xi(k^2+m^2)
\frac{\sin(t\sqrt{k^2 + m^2})}{\sqrt{k^2+m^2}}\E^{\I k(x-y)}dk,\\\nonumber
 [\mathbf{K}_1(t)](x,y)=&\frac {\I}{4\pi }\int\limits_{\R} V(z)\left(\int_{|k|\geq 1}\xi(k^2+m^2)\right.\\
\label{bKn}
&\left.\times\frac{\sin(t\sqrt{k^2 + m^2})}{\sqrt{k^2+m^2}}
\frac{\E^{\I k(|x-z|+|z-y|)}}{k}(\psi(y,z,k)+1)dk\right) dz.
\end{align}
Note that the derivative $\xi^\prime(x)$ has support inside the set $[m^2+1,m^2+2]$.
Therefore the function
\begin{equation}\label{eta}
\eta(k):=\frac{\I}{4\pi}\xi(k^2+m^2)\frac{k}{\sqrt{k^2+m^2}}
\end{equation}
satisfies the conditions of Lemma \ref{l1}. Applying this lemma with
$\alpha=2$, $g(k)=\psi(y,z,k)+1$, $p= |x-z|+|z-y|$, and taking into account \eqref{thh}, we get
\begin{equation}\label{born11}
\| \mathbf{K}_1(t)f\|_{L^\infty}\le C t^{-1/2}\|f\|_{L^1}\le C|t|^{-1/2}\|f\|_{H^{\frac 12,1}},\quad t\ge 1,
\end{equation}
since $H^{\frac 12,1}\subset L^1$ due to \cite [Theorems 6.2.3]{BL}.
It remains to get  an estimate of type \eqref{hKG} for $\mathbf{K}_0(t)$.
\begin{lemma}\label{Bessel}
Assume $V\in L^1_1$. Then 
\[
\| \mathbf{K}_0(t)f\|_{L^\infty}\le C|t|^{-1/2}\|f\|_{H^{\frac 12,1}},\quad |t|\ge 1.
\]
\end{lemma}
\begin{proof}
For any $f\in C_0^\infty$  \eqref{bK1} implies
\[
\int_{\R}[{\bf K}_0(t)](x,y)f(y)dy=\sum_{\mp}\int_{|k|\geq 1}\frac{\eta(k)}{k(1+k^2)^{1/4}}
\E^{\pm t\sqrt{k^2 + m^2}+\I k x}(1+k^2)^{1/4}\hat f(k)dk,
\] 
where $\eta(k)$ is defined by \eqref{eta}. 
Denote $g=\mathcal J_{\frac 12} f$. By  definition \eqref{H-def} we have 
\begin{equation}\label{bbb}
\|g\|_{L^1}=\|f\|_{H^{\frac 12,1}}.
\end{equation}  
Thus 
\[
\|{\bf K}_0(t)f\|_{L^\infty}\leq C \int_{\R} |g(y)|
 \sup\limits_{x,y\in\R}\Big|\int_{1}^\infty\eta(k)\frac{\E^{\pm\I t\sqrt{k^2+m^2}+\I k (x-y)}}
{k^{3/2}}dk\Big|dy,
\]
which together with \eqref{bbb} and \eqref{OI-est11} implies the required estimate for ${\bf K}_0$.
\end{proof}
Together with \eqref{born11} this establishes Theorem \ref{thKG}.
This finishes the proof of Theorem \ref{Main1} (i).

\section{The Klein--Gordon equation (non-resonant case)}
\label{KGn-sect}
Suppose that the operator $H$ in \eqref{H} has no resonance at $0$.
To prove Theorem \ref{Main1} (ii)  consider first the low-energy part of the solution.
Using the representation \eqref{WS} we rewrite \eqref{E-rep} as follows
\begin{align*}
&[\E^{-\I t\mathbf{H}} \mathbf{P}_c\,\zeta(\mathbf{H}^2)](x,y)\\
&\qquad = \sum_{\sigma_1,\sigma_2\in\{\pm\}} \frac{1}{2\pi}\int_{-\infty}^{\infty}A_{\sigma_1}(k)\E^{\I t\sqrt{k^2+m^2}}\,
\E^{\I|y-x|k}\zeta(k^2+m^2)\, {\mathcal T}_{\sigma_2}(x,y,k) dk
\end{align*}
where
\[
A_{\pm}(k)=\begin{pmatrix}
1  & \mp\ds\frac{\I}{\sqrt{k^2+m^2}}\\
  \pm \I\sqrt{k^2+m^2}& 1
\end{pmatrix},
\]
and ${\mathcal T}_{\pm}(x,y,k)=|T(k)|^2f_{\pm}(k)f_{\pm}(-k)$.
Applying integration by parts we obtain for the summand $[\E^{-\I t\mathbf{H}}\mathbf{P}_c\,\zeta(\mathbf{H}^2)]_{++}(x,y)$
with $A_+$ and ${\mathcal T}_+$:
\begin{align*}
&[\E^{-\I t\mathbf{H}}\mathbf{P}_c\,\zeta(\mathbf{H}^2)]_{++}(x,y)\\
&=-\frac{1}{2\pi\I t}\int_{-\infty}^{\infty}\E^{\I t\sqrt{k^2+m^2}}\frac{\partial}{\partial k}
\Big[\E^{\I|y-x|k}\zeta(k^2+m^2)\frac{\sqrt{k^2+m^2}}{k}A_+(k)\,{\mathcal T}_+(x,y,k)\Big]dk.
\end{align*}
Using the same arguments as in the proof of Theorem \ref{Main-new} (see Section \ref{lln-sec})
we obtain
\[
|[\E^{-\I t\mathbf{H}}\mathbf{P}_c\,\zeta(\mathbf{H}^2)]_{++}(x,y)|\le Ct^{-3/2}(1+|x|)(1+|y|),\quad t\ge 1
\]
and hence
\[
\big\|[\E^{-\I t\mathbf{H}}\mathbf{P}_c]_{++}\zeta(\mathbf{H}^2)\big\|_{L^1_1\to L^\infty_{-1}}
={\mathcal O}(|t|^{-3/2}),\quad t\to \infty.
\]
The other summands  can been estimated  similarly, and we get
\begin{equation}\label{KGn-e}
\big\|[\E^{-\I t\mathbf{H}}\mathbf{P}_c]\zeta(\mathbf{H}^2)\big\|_{L^1_1\to L^\infty_{-1}}
={\mathcal O}(t^{-3/2}),\quad t\to \infty.
\end{equation}
It remains to consider the high-energy part.
 To simplify notations we denote $c(k,t):=\cos(t\sqrt{k^2 + m^2})$ and $\chi(k):=\xi(k^2 + m^2)$.
Applying integration by parts to  \eqref{bK1} and \eqref{bKn}  we get
\begin{align}\nonumber
[\mathbf{K}_0(t)](x,y) &=\frac 1{2\pi t}\int\limits_{|k|\geq 1} \!\! c(k,t) 
\frac{\partial}{\partial k}\frac{\chi(k)\E^{\I k(x-y)}}{k} dk,\\ \label{pred5}
 [\mathbf{K}_1(t)](x,y) &=\frac {\I}{4\pi t}\int\limits_{\R} V(z) \!\!
\int\limits_{|k|\geq 1} \!\! c(k,t) \frac{\partial}{\partial k}\frac{ \chi(k)
\E^{\I k(|x-z|+|z-y|)}(1+\psi(y,z,k))}{k^2}dk\, dz.
\end{align}
To estimate \eqref{pred5}, recall that 
$\Vert \frac{\partial}{\partial k}\psi(z,y,k)\Vert_{\mathcal A}\le C(1+|z|)(1+|y|)$
by \eqref{eq:12} and \eqref{3}. Moreover,
$|x-z|+|z-y|\le (1+|x|)(1+|y|)(1+2|z|)$. 
Thus  the $\mathcal A_1$ norm of the derivative with respect to $k$ in the integrand of \eqref{pred5} can be estimated by $C(1+|x|)(1+|y|)(1+|z|)$. Moreover, $\chi^\prime(k)$ is a smooth function with a finite support.
Applying Lemma \ref{l1} to the integral with respect to $k$ in \eqref{pred5} and taking into account that $|V(z)|\in L^1_1(\R)$ we come to the estimate
\begin{equation}\label{tmK1}
 [\mathbf{K}_1(t)](x,y)|\le Ct^{-3/2} (1+|x|)(1+|y|),\quad t\ge 1.
\end{equation}
Further,
\begin{align*}
[\mathbf{K}_0(t)](x,y)&=\frac 1{2\pi t}\int_{|k|\geq 1} c(k,t) 
\chi^\prime(k)k^{-1}\E^{\I k(x-y)}dk\\
&-\frac{1}{2\pi t}\int_{|k|\geq 1} c(k,t)\chi(k)\E^{\I k(x-y)}k^{-2}dk\\
&+\frac{\I}{2\pi t}\int_{|k|\geq 1} (x-y)c(k,t)\chi(k)\E^{\I k(x-y)} k^{-1}dk\\
&=[\mathbf{K}_{01}(t)](x,y)+[\mathbf{K}_{02}(t)](x,y)+[\mathbf{K}_{03}(t)](x,y).
\end{align*}

Lemma \ref{l1} applied to $\mathbf{K}_{01}$ and $\mathbf{K}_{02}$ implies
\begin{equation}\label{born14}
\| \mathbf{K}_{0j}(t)f\|_{L^\infty}\le Ct^{-3/2}\|f\|_{L^1},\quad j=1,2,\quad t\ge 1.
\end{equation} 
It remains to estimate $\mathbf{K}_{03}$. Denote $g=\mathcal J_{\frac 12} f$, then we
have $\hat f(k)=(1+k^2)^{-\frac 14}\hat g(k)$. Using formulas
$\mathcal F[\cdot f(\cdot)]=\hat f^\prime$ and
$\hat f'(k)=(1+k^2)^{-\frac 14}\hat g'(k)-\frac k2(1+k^2)^{-\frac 54}\hat g(k)$ we get
\begin{align*}
\int_{\R}[\mathbf{K}_{03}(t)](x,y)f(y)dy&=\frac {\I x}{4\pi t}\sum_{\pm}\int\limits_{|k|\geq 1}\frac{\chi(k)}{k(1+k^2)^{\frac 14}}
\E^{\pm t\sqrt{k^2 + m^2}+\I k x}(1+k^2)^{\frac 14}\hat f(k)dk\\
&+\frac {\I}{4\pi t}\sum_{\pm}\int\limits_{|k|\geq 1}\frac{\chi(k)}{k(1+k^2)^{\frac 14}}
\E^{\pm t\sqrt{k^2 + m^2}+\I k x}\hat g'(k)dk\\
&-\frac {\I}{8\pi t}\sum_{\pm}\int\limits_{|k|\geq 1}\frac{\chi(k)}{(1+k^2)^{\frac 54}}
\E^{\pm t\sqrt{k^2 + m^2}+\I k x}\hat g(k)dk.
\end{align*}
Procceding as in the proof of Lemma \ref{Bessel} we get
\[
\|(1+|\cdot|)^{-1}{\bf K}_{03}(t)f\|_{L^\infty}\leq C t^{-3/2}\Big(\|g\|_{L^1}+\||\cdot|g\|_{L^1}\Big)
\leq C t^{-3/2}\Big(\|f\|_{H^{\frac 12,1}}+\|f\|_{H^{\frac 12,1}_1}\Big),
\]
The last estimate and \eqref{born14} then imply
\[
\Vert \mathbf{K}_0(t)\Vert_{H^{\frac 12,1}_1\to L^{\infty}_{-1}}\le Ct^{-3/2},
\]
Together with \eqref{tmK1} this gives
\[
\Vert[\E^{-\I t\mathbf{H}}]^{12}\xi(\mathbf{H}^2)f\Vert_{L^{\infty}_{-1}}
\le C|t|^{-3/2}\Vert f\Vert_{H^{\frac 12,1}_1}.
\]
Combining this with \eqref{KGn-e} completes the proof of Theorem \ref{Main1} ii).

\appendix
\section{A decay estimate}
\label{A}

The following is \cite[Lem.~6.7]{Cu}) which is an adapted version of \cite[Lemma 2]{MSW}.
We include a proof here for the sake of completeness.

\begin{lemma}[\cite{Cu,MSW}]\label{prop:psi0}
Let $\Lambda(k)$, $k\geq 0$, be a smooth function such that
$\Lambda(k)=O(k^{-5/2})$ as $k\to\infty$ and let 
\[
\Psi(k,t):=\int_{0}^k\E^{\I t\phi(\tau)}d\tau,\quad t\geq 1,\quad k\geq 0,
\]
where $\phi(\tau)=\sqrt{\tau^2 + 1} + v\tau$ with $v\in\R$.
Then the following estimate is valid uniformly with respect to $v$
\begin{equation}\label{newmain}
J(t):= \int_1^t|\Psi(k,t)\Lambda(k)|dk\leq C t^{-1/2}.
\end{equation}
\end{lemma}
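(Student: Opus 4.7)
My plan is to start from Lemma~\ref{lem:vC}. Since $\phi''(\tau)=(\tau^2+1)^{-3/2}$ is strictly positive and monotone decreasing on $[0,\infty)$, its minimum on $[0,k]$ equals $\phi''(k)=(k^2+1)^{-3/2}$, so the lemma applied with $g\equiv 1$ yields
\[
|\Psi(k,t)|\le Ct^{-1/2}(k^2+1)^{3/4}\le Ct^{-1/2}k^{3/2},\qquad k\ge 1.
\]
Substituting into $J(t)$ with $|\Lambda(k)|\le Ck^{-5/2}$ already gives the borderline bound $J(t)\le Ct^{-1/2}\log t$, short of the claim by exactly one logarithmic factor, and the whole difficulty of the lemma is to remove this $\log t$.

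To eliminate the logarithm, I would complement the van der Corput bound with an integration-by-parts estimate valid for $k$ past any stationary point of $\phi$. Since $\phi'(\tau)=\tau/\sqrt{\tau^2+1}+v$ has a unique zero at $\tau_v=-v/\sqrt{1-v^2}$ when $v\in(-1,0)$, and is of constant sign on $(0,\infty)$ otherwise, I would choose a $v$-dependent splitting point $k_0>\tau_v$ (and $k_0=1$ when there is no critical point). Integration by parts on $[k_0,k]$ gives
\[
\Big|\int_{k_0}^{k}\E^{\I t\phi(\tau)}\,d\tau\Big|\le\frac{C}{t|\phi'(k_0)|}+\frac{C}{t|\phi'(k)|}+\frac{C}{t}\int_{k_0}^{k}\frac{\phi''(\tau)}{\phi'(\tau)^2}\,d\tau,
\]
where the remainder integrand is integrable on $[k_0,\infty)$ since $\phi'(\tau)\to 1+v$ while $\phi''(\tau)$ decays like $\tau^{-3}$. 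Writing $\Psi(k,t)=\Psi(k_0,t)+\int_{k_0}^{k}$ and combining with the van der Corput bound at $k_0$ then gives $|\Psi(k,t)|\le Ct^{-1/2}$ on the tail $k\ge k_0$. Splitting $J(t)$ accordingly, the tail contribution is bounded by $Ct^{-1/2}\int_{k_0}^{\infty}k^{-5/2}\,dk\le Ct^{-1/2}k_0^{-3/2}$, and the head contribution by $Ct^{-1/2}\log k_0$ (from the van der Corput bound on $[1,k_0]$), a logarithmic term which is absorbed by the tail factor $k_0^{-3/2}$ after a judicious optimization in $k_0$.

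The main obstacle will be uniformity in $v$. For $|v|$ bounded away from $1$ the critical point $\tau_v$ is uniformly bounded and the splitting is routine. The delicate case is $v$ close to $-1$, where $\tau_v\to\infty$ and $|\phi'|$ stays small well past the critical point, so that the constants from integration by parts threaten to blow up. Here a $v$-dependent choice of $k_0=k_0(v)$ is forced, and one must balance the loss in $|\phi'|$ against the strong polynomial decay $k^{-5/2}$ of $\Lambda$ — it is precisely this decay rate that absorbs the $v$-dependence. A case analysis in $v$ (bounded away from $-1$ versus near $-1$) together with a compatible choice of $k_0(v)$ should then give $J(t)\le Ct^{-1/2}$ uniformly in $v$.
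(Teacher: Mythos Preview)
Your overall plan parallels the paper's in spirit, but the step where you claim the head contribution $Ct^{-1/2}\log k_0$ is ``absorbed by the tail factor $k_0^{-3/2}$ after a judicious optimization in $k_0$'' does not work, and this is precisely the delicate case you flag. The head and tail contributions are additive, not multiplicative, so optimizing $\log k_0 + k_0^{-3/2}$ over $k_0\ge 1$ simply picks $k_0=O(1)$. But once $v\to -1^+$ you are forced to take $k_0>\tau_v\to\infty$, and then the head bound $Ct^{-1/2}\log k_0(v)$ is unbounded in $v$; since $\tau_v$ can be of order $t$ inside the integration range $[1,t]$, you recover only the $t^{-1/2}\log t$ estimate you started from. (Your tail claim $|\Psi(k,t)|\le Ct^{-1/2}$ is also not literally correct---the vdC-2 bound at $k_0$ gives $Ct^{-1/2}k_0^{3/2}$ and the integration-by-parts remainder is $\sim k_0^2/t$ when $k_0\sim 2\tau_v$---but these \emph{are} cancelled by $\int_{k_0}^\infty k^{-5/2}dk\sim k_0^{-3/2}$, so after a correct computation the tail is indeed $O(t^{-1/2})$. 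The head is the real obstruction.)

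What is missing is that the first-derivative van der Corput bound must also be used on the \emph{head}, not just the tail. The key observation in the paper is the two-sided lower bound $|\phi'(\tau)|\ge c\,\tau^{-2}$, valid both for $\tau\le \tau_v/2$ and for $\tau\ge 2\tau_v$ (from $|\phi'(\tau)-\phi'(\tau_v)|\ge \phi''(2\tau)\,\tau$ by monotonicity of $\phi'$). This yields $|\Psi(k,t)|\le C(k^2t^{-1}+t^{-1/2})$ on $[1,\tau_v/2]$, which integrated against $|\Lambda(k)|\le Ck^{-5/2}$ gives $C\big(t^{-1}\!\int_1^{A}k^{-1/2}dk+t^{-1/2}\big)\le Ct^{-1/2}$ for any $A\le t$, with no logarithm. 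The remaining dyadic window $[\tau_v/2,2\tau_v]$ is harmless under vdC-2 because $\int_{y/2}^{2y}k^{-1}dk=\log 4$ is independent of $y$. Thus a three-piece splitting around $\tau_v$, rather than your two-piece splitting at a single $k_0>\tau_v$, is what eliminates the logarithm uniformly in $v$.
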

\begin{proof}
For simplicity we refer to the van der Corput lemma for the first, second derivative as vdC-1, vdC-2,
respectively (see \cite[Corollary 5 and Lemma 7]{rog}).
First of all, we observe  the second derivative of the phase function $\phi(\tau)$ admits the estimate 
\begin{equation} \label{pprime}
\min\limits_{0\le\tau\le k}\phi^{\prime\prime}(\tau)=\min\limits_{0\le\tau\le k}(1+\tau^2)^{-3/2}=(1+k^2)^{-3/2}.
\end{equation}
Hence, vdC-2 implies
\begin{equation}\label{secondpsi}
 |\Psi(k,t)|\leq Ct^{-1/2}(k+1)^{3/2},\quad k\geq 0,\quad t\geq 1.
\end{equation}
The first derivative of the phase function $\phi(\tau)$ is an increasing function,
satisfying the following estimates 
\begin{equation}\label{phider}
 |\phi^\prime(\tau)|\geq \begin{cases}
  2^{-1/2}, & v\geq 0, \ \tau\geq 1,\\
 \frac 12(\tau^2 +1)^{-1}, & v\leq -1,  \ \tau \geq 0.
\end{cases}
\end{equation}
For $v\in (-1,0)$ the function $\phi^\prime$ has a zero at $\tau_0=-v(1-v^2)^{-1/2}$. 
We study these three regions for $v$ separately. 
For $v\geq 0$ and $k\ge 1$ we use vdC-1  and \eqref{phider} to get the bound
$|\Psi(k,t) - \Psi(1,t)|\leq C t^{-1}$. Since $|\Psi(1,t)|\leq Ct^{-1/2}$ by \eqref{secondpsi} then for $v\geq 0$
\eqref{newmain} follows immediately. Similarly, for $v\leq -1$ from \eqref{phider} and \eqref{secondpsi} it follows
\begin{equation}\label{imp66}
|\Psi(k,t)|\leq C k^2 t^{-1} + |\Psi(1,t)| \le C(k^2 t^{-1} + t^{-1/2}), \quad k\ge 1.
\end{equation}
Thus, \eqref{newmain} holds in this case also.

It remains to consider the case $v\in(-1,0)$, or equivalently $\tau_0\in (0,\infty)$.
In particular, will estimate $J(t)$ in terms of $\tau_0$ rather than $v$.
By monotonicity of $\phi^\prime$ and \eqref{pprime} for all $\tau\in (0,\tau_0/2]$ we get 
\[
|\phi'(\tau)|=
\frac{\tau_0}{\sqrt{\tau_0^2+1}}-\frac{\tau}{\sqrt{\tau^2+1}}\ge\phi'(2\tau)-\phi'(\tau)
\ge\phi''(2\tau)\tau\geq \frac{C}{\tau^2}.
\]
Thus for $\tau_0/2\ge 1$ we obtain similarly as in \eqref{imp66}
\begin{equation}\label{e1}
|\Psi(k,t)|\le C(k^2 t^{-1} + t^{-1/2}),\quad 1\le k\le \tau_0/2.
\end{equation}
Furthermore, for all $\tau\ge 2\tau_0$ we obtain
\[
\phi'(\tau)=
\frac{\tau}{\sqrt{\tau^2+1}}-\frac{\tau_0}{\sqrt{\tau_0^2+1}}\ge \phi'(\tau)-\phi'(\tau/2)
\ge\phi''(\tau)\tau/2\ge\frac{C}{\tau^2}.
\]
Therefore
\begin{equation}\label{e2}
|\Psi(k,t)|\le C(k^2 t^{-1} + t^{-1/2}),\quad \max \{1,2\tau_0\}\le k.
\end{equation}
Moreover, \eqref{secondpsi} implies
\begin{equation}\label{imp77}
\int_{\tau_0/2}^{2\tau_0}|\Psi(k)\Lambda(k)|dk\leq C t^{-1/2},
\end{equation}
since $\int_{y/2}^{2y}k^{-1} dk$ does not depend on $y>0$.
For the same reasons we also have the estimate $J(t)-J(t/4)\le Ct^{-1/2}$.
Moreover in the case $4\le t\le 2\tau_0$ it follows from \eqref{e1} that
$J(t/4)\le Ct^{-1/2}$ and we obtain \eqref {newmain} for $4\le t\le 2\tau_0$.
(Note that in the case  $1\le t\le 4$  \eqref {newmain} holds for any $\tau_0\in (0,\infty)$.)

Next consider $1\le 2\tau_0 \le t$. If, additionally, $\tau_0/2\le 1$, then
\[
J(t)\le\int_{\tau_0/2}^{2\tau_0}|\Psi(k)\Lambda(k)|dk+\int_{2\tau_0}^{t}|\Psi(k)\Lambda(k)|dk\leq C t^{-1/2}
\]
by \eqref{imp77} and \eqref{e2}. In the case $1\le \tau_0/2\le 2\tau_0 \le t$ we get
\[
J(t)\le\int\limits_{1}^{\tau_0/2}|\Psi(k)\Lambda(k)|dk+
\int\limits_{\tau_0/2}^{2\tau_0}|\Psi(k)\Lambda(k)|dk+\int\limits_{2\tau_0}^{t}|\Psi(k)\Lambda(k)|dk\leq C t^{-1/2}
\]
by \eqref{e1}, \eqref{imp77}, and \eqref{e2}.
Finally, in the case $2\tau_0 \le 1$ equation \eqref {newmain} follows from \eqref{e2}.
\end{proof}
\bigskip
\noindent{\bf Acknowledgments.}
We thank Alexander Komech for useful remarks.
I.E.\ gratefully acknowledges the hospitality of the Faculty of Mathematics of the University of Vienna.


\end{document}